\title{Balanced truncation and singular perturbation approximation model order reduction for stochastically controlled linear systems}
\author{Martin Redmann
        \thanks{Weierstrass Institute for Applied Analysis and Stochastics, Mohrenstrasse 39, 10117 Berlin
Germany, {\tt martin.redmann@wias-berlin.de}.} 
\and Melina A. Freitag
	    \thanks{Department of Mathematical Sciences, University of Bath, Claverton Down, BA2 7AY, United Kingdom, {\tt m.a.freitag@bath.ac.uk}.}
}
\def\R{\mathbb{R}}
\newcommand{\expn}{\operatorname{e}}
\newcommand{\diag}{\operatorname{diag}}
\newcommand{\spaned}{\operatorname{span}}
\newcommand{\kernel}{\operatorname{ker}}
\newcommand{\im}{\operatorname{im}}
\newcommand{\beq}{\begin{equation}}
\newcommand{\eeq}{\end{equation}}
\newcommand {\mat}      [1] {\left[\begin{array}{#1}}
\newcommand {\rix}          {\end{array}\right]}
\newcommand{\trace}{\operatorname{tr}}
\newtheorem{defn}{Definition}[section]
\newtheorem{example}[defn]{Example}
\newtheorem{lem}[defn]{Lemma}
\newtheorem{prop}[defn]{Proposition} 
\newtheorem{kor}[defn]{Corollary}
\newtheorem{thm}[defn]{Theorem}
\begin{document}
\maketitle

\begin{abstract}
When solving linear stochastic differential equations numerically, usually a high order spatial discretisation is used.
Balanced truncation (BT) and singular perturbation approximation (SPA) are well-known projection techniques in the deterministic
framework which reduce the order of a control system and hence reduce computational complexity. This work considers
both methods when the control is replaced by a noise term. We provide theoretical tools such as stochastic 
concepts for reachability and observability, which are necessary for balancing related model order reduction of
linear stochastic  differential equations with additive L\'evy noise. Moreover, we derive error bounds for both BT
and SPA and provide numerical results for a specific example which support the theory.
\end{abstract}

\paragraph{Keywords:} model order reduction, balanced truncation, singular perturbation approximation, stochastic systems, L\'evy process, Gramians, Lyapunov equations.

\paragraph{AMS Subject Classifications}
Primary 93A15, 93B40, 93E03, 93E30, 60J75. Secondary 93A30, 15A24.


\section{Introduction}

Many mathematical models of real-life processes pose challenges during numerical 
computations, due to their large size and complexity. Model order reduction (MOR) techniques are methods that reduce the computational complexity of numerical 
simulations, an overview of MOR methods is provided in \cite{antoulas,schilders2008model}. MOR techniques such as balanced truncation (BT) and singular perturbation approximation
(SPA) are methods which have been introduced in \cite{moore} and \cite{spa}, respectively, for linear deterministic systems 
\[
\dot{x}(t) = Ax(t)+Bu(t),\quad y(t) = Cx(t).
\]
Here $A\in\mathbb{R}^{n\times n}$ is asymptotically stable, $B\in \mathbb R^{n\times m}$, $C\in 
\mathbb R^{p\times n}$ and $x(t)\in\mathbb{R}^n$, $y(t)\in\mathbb{R}^p$, $u(t)\in\mathbb{R}^m$ are state, output and input of the system, respectively. 
From the Gramians $P$ and $Q$ which solve dual Lyapunov equations 
\[
AP+PA^T= -BB^T, \quad A^T Q+QA = -C^TC,
\]
a balancing transformation is found, which is used to project the state space of size $n$ to a much smaller dimensional state space (see, e.g.  \cite{antoulas}). 

Recently, the theory for BT and SPA has been extended to stochastic linear systems of the form
\begin{equation}
\label{linsysafterdisintr}
dx(t) = Ax(t) dt+Bu(t)dt+\sum_{k=1}^q N_k x(t-) dM_k(t), \quad y(t) = Cx(t),
\end{equation}
where $A$, $B$ and $C$ as above, and $N_k\in\mathbb{R}^{n\times n}$ and $M_k$ ($k= 1, \ldots, q$) are 
uncorrelated scalar square integrable L\'evy processes with mean zero (often 
$q=1$ and the special case of Wiener processes are considered, see, for example, 
\cite{bennerdamm,dammbennernewansatz,hartmann2013balanced}). In this case BT and SPA require the solution of more general Lyapunov equations of the form 
\[
AP+PA^T+\sum_{k=1}^q N_k P N_k^T c_k = -BB^T, \quad A^T Q+QA +\sum_{k=1}^q N_k^T Q N_k c_k= -C^TC,
\]
where $c_k = \mathbb{E}[M_k(1)^2]$ for general L\'evy processes. Note that $c_k=1$, $k=1,\ldots, q$ for the case of a Wiener process \cite{bennerdamm}.
We refer to \cite{redmannbenner,dammbennernewansatz,redmann16, redSPA} for a detailed theoretical and numerical treatment of balancing related MOR for (\ref{linsysafterdisintr}). 

In this paper we are going to study balancing related MOR for systems of the form
\begin{equation}
\label{linsysafterdisintr2}
dx(t)=A x(t)dt+ BdM(t),\quad y(t)= C x(t),
\end{equation}
where $BdM(t) = \sum_{i=1}^m b_i dM_i(t)$ and $b_i$ is the $ith$ column of $B\in\mathbb{R}^{n\times m}$. The processes $M_i$ are 
the components of a square integrable mean zero L\'evy process $M=\left(M_1, \ldots, M_m\right)^T$ that takes values in $\mathbb R^m$. Consequently, these 
components are not necessarily uncorrelated. For a general theoretical treatment of SDEs with L\'evy noise we refer to \cite{applebaumendlich}.

The setting in (\ref{linsysafterdisintr2}) is of particular interest in many applications. If one is interested in a large number of different realisations of the output
$y(t)$ (e.g. to compute moments of the form $\mathbb E\left[ f(y(t))\right]$), then one needs to solve the SDE in (\ref{linsysafterdisintr2}) a large number of times.
For a state space of high dimension this is computationally expensive. Reduction of the state space dimension decreases the
computational complexity when sampling the solution to (\ref{linsysafterdisintr2}), as the SDE can then be solved in much smaller dimensions. Hence the computational costs are reduced dramatically.

The linear system (\ref{linsysafterdisintr2}) is a problem where the control is noise. In this case the standard theory for balancing related MOR applied to a deterministic system no longer applies. 

Balanced truncation has been applied to linear systems with white noise before. The discrete time setting was discussed in \cite{ArunKung90}. For the continuous time setting, dissipative Hamiltonian systems with Wiener noise were treated in \cite{hartmann2011,hartmann2008}, but no error bounds were provided. In this paper we consider both BT and SPA model order reduction. As far as we are aware, no theory and in particular error bounds for balancing related MOR have been developed for continuous time SDEs with L\'evy noise.

Using theory for linear stochastic differential equations with additive L\'evy noise we provide a stochastic concept of reachability. This concept motivates a new formulation of the reachability Gramian. We prove bounds for the error between the full and reduced system which provide criteria for truncating, e.g. criteria for a suitable size of the reduced system. We analyse both BT and SPA and apply the theory  directly to an application arising from a second order damped wave equation. 

We now consider a particular example which explains why the above setting is of practical interest.

\paragraph{Motivational example}
In \cite{redbensec} the lateral time-dependent displacement $\mathcal Z$ of an electricity cable impacted by wind was modeled by the following one-dimensional symbolic second order SPDE with L\'evy 
noise:
\begin{align}\label{stringexpintro}
 \frac{\partial^2}{\partial t^2} \mathcal Z(t, \zeta)+\alpha 
\frac{\partial}{\partial t} 
 \mathcal Z(t, \zeta)=\frac{\partial^2}{\partial \zeta^2} 
\mathcal Z(t,\zeta)+\expn^{-(\zeta-\frac{\pi}{2})^2}u(t)+2\expn^{-(\zeta-\frac{
\pi}{2})^2 }\mathcal Z(t-, \zeta) \frac{\partial}{\partial t} M_1(t)
\end{align}
for $t\in[0, T]$, $\zeta\in[0, \pi]$ and $\alpha>0$, with boundary and initial conditions 
\begin{align}
\label{eq:bcic}
\mathcal Z(0, t)=0=\mathcal Z(\pi, t)\;\;\;\text{and}\;\;\; \mathcal 
Z(0,\zeta), \left.\frac{\partial}{\partial 
t} \mathcal Z(t, \zeta)\right\vert_{t=0}\equiv 0.
\end{align}
For small $\epsilon>0$, the output equation 
\begin{align}\label{introbspout}
\mathcal Y(t)=\frac{1}{2\epsilon}\int_{\frac{\pi}{2}-\epsilon}^{\frac{\pi}{2}
+\epsilon }\mathcal Z(t,\zeta) d\zeta
\end{align}
is approximately the position of the middle of the cable.
In \cite{redbensec}, it is shown that transforming this SPDE in 
into a first order SPDE and then discretising it 
in space, leads to a system of the form (\ref{linsysafterdisintr}) where $q=m=p=1$. 

One drawback of the approach above is, that, when the electricity cable is in steady state, the wind has no impact. A more realistic scenario, which models the wind as some form of stochastic input, is the following symbolic equation
\begin{align}\label{inrobspeq}
 \frac{\partial^2}{\partial t^2} \mathcal Z(t, \zeta)+\alpha 
\frac{\partial}{\partial t} 
 \mathcal Z(t, \zeta)=\frac{\partial^2}{\partial \zeta^2} 
\mathcal Z(t,\zeta)+\sum_{k=1}^m f_k(\zeta)\frac{\partial}{\partial t} M_k(t)
\end{align}
for $t\in[0, T]$, $\zeta\in[0, \pi]$ and $\alpha>0$, boundary and initial conditions as in (\ref{eq:bcic}), and $M_k$ the components of a square integrable mean zero L\'evy process $M=\left(M_1, \ldots, M_m\right)^T$ that takes values in $\mathbb R^m$. In this paper, we consider a framework which covers this model. Moreover we modify the output in (\ref{introbspout}) and let
\begin{align}\label{introbspoutnew}
\mathcal 
Y(t)=\frac{1}{2\epsilon}\left(\begin{matrix}\int_{\frac{\pi}{2}-\epsilon}^{
\frac{\pi}{2}+\epsilon }\mathcal Z(t,\zeta) d\zeta &
\int_{\frac{\pi}{2}-\epsilon}^{\frac{\pi}{2}
+\epsilon }\frac{\partial}{\partial t} \mathcal Z(t,\zeta) 
d\zeta\end{matrix}\right)^T,
                  \end{align}
so that both the position and velocity of the middle of the string are observed. Transformation and discretisation of this SPDE leads to a system of the form (\ref{linsysafterdisintr2}) where $A$ is an asymptotically stable matrix, i.e. $\sigma(A)\subset \mathbb C_-$.

This paper is set up as follows. Section \ref{sec:balancing} provides the theoretical tools for balancing linear SDEs with additive
L\'evy noise. We explain the theoretical concepts of reachability and observability in this setting and show how this motivates MOR using
BT and SPA. Moreover we provide theoretical error bounds for both methods. In Section \ref{sec:wave} we show how a wave equation driven by L\'evy noise can
be transformed into a first order equation and then reduced to a system of the form (\ref{linsysafterdisintr2}) by using a spectral Galerkin method.
Numerical results which support our theory are provided in Section \ref{sec:numerics}. 

\section{Balancing for linear stochastic differential equations with additive L\'evy noise}
\label{sec:balancing}

In \cite{antoulas, spa, moore} balancing related MOR was considered for deterministic systems of the form 
\begin{align}\label{detsysant}
             \dot x(t)&=Ax(t)+Bu(t),\;\;\;x(0)=0,\\ \nonumber
             y(t)&=Cx(t), \;\;\;t\geq 0,
            \end{align}
where $A\in \mathbb R^{n\times n}$ was assumed to be asymptotically stable, i.e. $\sigma(A)\subset \mathbb C_-$, $B\in \mathbb R^{n\times m}$, $C\in 
\mathbb R^{p\times n}$ and $u\in L^2([0, T])$ for all $T>0$ was a deterministic control. 

We now turn our attention to a stochastic system 
\begin{align}\label{stochsysnew}
             dx(t)&=Ax(t)dt+BdM(t),\;\;\;x(0)=x_0\\ \nonumber
             y(t)&=Cx(t), \;\;\;t\geq 0,
            \end{align}
which, in Section \ref{subsecspeltralgal}, represents a spatially discretised version of an SPDE. The matrices $A$, $B$ and $C$ are as above and the 
$\mathbb R^m$-valued process $M$ is a square integrable 
L\'evy processes with mean zero. One might interpret system 
(\ref{stochsysnew}) as system (\ref{detsysant}) with $u(t)=\dot M(t)$ but the noise $\dot M(t)$ is no control in the classical sense. First of all, stochastic controls were not admissible
in the deterministic setting and secondly the classical derivative does not exist. So, if we want to study balancing related MOR for the ``particular control'' $\dot 
M(t)$, we need to make sense of this setting which we do by the Ito-type SDE in (\ref{stochsysnew}).

In the deterministic case reachability and observability concepts are 
introduced to characterise the importance of states. Difficult to reach states (states 
which require large energy to reach them) and difficult to observe states 
(states which only produce little observation energy) are seen to be 
unimportant in the systems dynamics. In balancing related MOR 
the idea is to create a system, where the dominant reachable and observable 
states are the same. Those are then truncated to obtain a reduced order model (ROM).

Applying balancing related MOR to (\ref{stochsysnew}) requires a few modifications compared to the classical deterministic framework. We introduce a stochastic reachability concept in Section \ref{reachsec} which 
also leads to a different reachability Gramian compared to the deterministic 
case. For the observation concept we follow 
the deterministic approach. We then describe the procedure of balancing for 
systems with additive noise in Section \ref{subsecprocedure} which is similar 
to the deterministic case. Afterwards, we will discuss two particular techniques 
which are BT and SPA. Since it is not a priori clear whether these approaches for system 
(\ref{stochsysnew}) perform as well as for deterministic systems, 
we contribute an error bound for both BT and SPA in Section \ref{btandspa}. 
These error bounds enable us to point out the cases, where BT and SPA work well, and they can be used to find a suitable ROM dimension.

\subsection{Reachability and Observability}\label{reachsec}

With suitable reachability and observability concepts we want to analyze which 
states in system (\ref{stochsysnew}) are unimportant and hence can be 
neglected. 

\paragraph{Reachability} We begin with a stochastic reachability concept, where the 
particular choice of $M$ is taken into account. Starting from zero ($x_0=0$) in 
\begin{align}\label{stochstatenew}
             dx(t)=Ax(t)dt+BdM(t),\;\;\;x(0)=x_0, \;\;\;t\geq 0,
            \end{align}
we investigate how much the noise can control the state away from zero. We define what is meant be reachability in the stochastic case, 
where $x(t, x_0, M)$, $t\geq 0$, denotes the solution to (\ref{stochstatenew}) 
with initial condition $x_0\in\mathbb R^n$ and noise process $M$.
\begin{defn}
 A state $x\in\mathbb R^n$ is not reachable from zero on the time interval $[0, 
T]$, $T>0$, if it is contained in an open set $O$ with \begin{align*}
      \mathbb P\left\{x(t, 0, M)\in O,\;\;\;\text{for every }t\in[0, 
T]\right\}=0,
                                                       \end{align*}
else $x$ is reachable. The system is called completely reachable if 
\begin{align}\label{completereach}
      \mathbb P\left\{x(t, 0, M)\in O,\;\;\;\text{for some }t\in[0, T]\right\}>0
                                                       \end{align}
for every open set $O\subseteq\mathbb R^n$. 
\end{defn}

\noindent We refer to \cite{stochreach}, where weak controllability was analyzed for 
equations with Wiener noise. Weak controllability turns out to be similar to 
condition (\ref{completereach}).\smallskip

To characterise the degree of reachability of a state, we introduce finite time 
reachability Gramians $P(t):=\mathbb{E} \left[x(t, 0, M)x^T(t, 0, M)\right]$ 
which are the covariance matrices of $x(t, 0, M)$ at fixed times $t\geq 0$. 
Before we study the meaning of these Gramians, we show that $P(t)$ is the 
solution of a matrix differential equation.
\begin{prop}
 The matrix-valued function $P(t)$, $t\geq 0$, is the solution to 
\begin{align}\label{matrixdiffgl}
\dot X(t)=AX(t)+X(t)A^T+ B \mathcal Q_M B^T,
    \end{align}
where $\mathcal Q_M=\mathbb E[M(1) M^T(1)]$ is the covariance matrix of $M$ at 
time $1$.
\end{prop}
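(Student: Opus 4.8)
The plan is to use the explicit variation-of-constants representation of the solution to the linear SDE \gref{stochstatenew} with $x_0=0$, compute its second moment, and then differentiate. First I would write
\[
x(t,0,M)=\int_0^t \expn^{A(t-s)} B\, dM(s),
\]
which is the standard mild/strong solution of \gref{stochstatenew} (this is justified by the theory of SDEs with additive L\'evy noise cited in the excerpt, e.g.\ \cite{applebaumendlich}). The next step is to apply the It\^o isometry for stochastic integrals with respect to a square integrable mean-zero L\'evy process: for deterministic integrands $\Phi(s)$ one has
\[
\mathbb E\left[\left(\int_0^t \Phi(s)\, dM(s)\right)\left(\int_0^t \Phi(s)\, dM(s)\right)^T\right]=\int_0^t \Phi(s)\,\mathcal Q_M\,\Phi(s)^T\, ds,
\]
where $\mathcal Q_M=\mathbb E[M(1)M^T(1)]$ is the covariance of the L\'evy process at time $1$ (using that the covariance of $M(s)$ is $s\,\mathcal Q_M$ by stationarity and independence of increments, so the associated quadratic variation measure is $\mathcal Q_M\, ds$). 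Applying this with $\Phi(s)=\expn^{A(t-s)}B$ gives
\[
P(t)=\int_0^t \expn^{A(t-s)} B\,\mathcal Q_M\, B^T \expn^{A^T(t-s)}\, ds=\int_0^t \expn^{Au} B\,\mathcal Q_M\, B^T \expn^{A^T u}\, du,
\]
after the substitution $u=t-s$.

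The final step is to differentiate this expression in $t$. Differentiating the integral (with the integrand in the second form independent of $t$ and only the upper limit depending on $t$) and using $\frac{d}{dt}\expn^{Au}=A\expn^{Au}$ after reverting to the first form, one obtains
\[
\dot P(t)=A P(t)+P(t) A^T + B\,\mathcal Q_M\, B^T,
\]
which is exactly \gref{matrixdiffgl} with $X=P$. Since $x(0,0,M)=0$, we also get the initial condition $P(0)=0$, so $P(t)$ is \emph{the} solution (uniqueness of solutions of the linear matrix ODE \gref{matrixdiffgl} is immediate, as it is an affine ODE with Lipschitz right-hand side).

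I expect the only genuinely delicate point to be the precise form of the It\^o isometry for a general (possibly jump-containing) square integrable L\'evy process $M$ taking values in $\mathbb R^m$ with correlated components: one must argue that the second-moment structure of $M$ is captured entirely by $\mathcal Q_M$, i.e.\ that $\langle M_i,M_j\rangle_t = (\mathcal Q_M)_{ij}\,t$, which follows from stationarity of increments, the mean-zero assumption, and square integrability. Everything else—the variation-of-constants formula and differentiation under the integral sign—is routine. An alternative, if one prefers to avoid the explicit solution formula, is to apply the It\^o product formula to $x(t)x^T(t)$, take expectations (the martingale parts of $M$ integrate to zero), and collect the drift terms together with the quadratic-covariation contribution $B\,\mathcal Q_M\,B^T\,dt$; this yields the same differential equation directly.
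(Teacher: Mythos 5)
Your proof is correct, but it takes a genuinely different route from the paper's. You start from the variation-of-constants (Ornstein--Uhlenbeck) representation $x(t,0,M)=\int_0^t \expn^{A(t-s)}B\,dM(s)$ and apply the It\^o isometry for deterministic integrands against a square integrable mean-zero L\'evy process, which hands you the closed form $P(t)=\int_0^t\expn^{Au}B\mathcal Q_MB^T\expn^{A^Tu}\,du$ immediately; the ODE then follows by Leibniz differentiation of the first form. The paper instead avoids the explicit solution formula: it applies the It\^o product formula (Corollary \ref{iotprodformelmatpro}) directly to $x(t)x^T(t)$, takes expectations so that the It\^o integrals vanish, and identifies the quadratic covariation $[e_i^Tx,e_j^Tx]_t$ with $[e_i^TBM,e_j^TBM]_t$ (same martingale parts and jumps, via (\ref{decomqucov})), whose expectation is computed from the linearity $\mathbb E[M(t)M^T(t)]=\mathcal Q_Mt$. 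That is precisely the ``alternative'' you sketch in your last sentence. The technical kernel is the same in both arguments --- that the second-moment structure of $M$ is captured entirely by $\mathcal Q_M$, which you correctly flag as the one delicate point --- but your route delivers the integral representation (\ref{finitreachgram}) as a by-product, whereas the paper obtains it only afterwards via Proposition \ref{prodeinfachh} together with uniqueness for the matrix ODE. Conversely, the paper's route is marginally more self-contained in that it does not presuppose the stochastic-convolution form of the solution, although the paper itself invokes exactly that representation, with the same citation, in Section \ref{btandspa}.
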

\begin{proof}
We replace $x(t, 0, M)$ by $x(t)$ to shorten the notation in the proof. Using 
Ito's formula in Corollary \ref{iotprodformelmatpro}, we obtain the following 
for $x(t)x^T(t)$, $t\geq 0$:
\begin{align*}
x(t) x^T(t)=\int_0^t x(s-) dx^T(s)+\int_0^t dx(s) 
x^T(s-)+\left([e_i^T x,e_j^T x]_t\right)_{i, j=1, \ldots, n},
\end{align*}
where $e_i$ is the $i$-th unit vector and we used $x_0=0$. Inserting the stochastic differential of 
$x(t)$ yields \begin{align*}
&\int_0^t x(s-) dx^T(s)=\int_0^t x(s-) x^T(s) A^T ds+ \int_0^t x(s-)dM^T(s) B^T 
\;\;\;\text{and}\\
&\int_0^t dx(s) x^T(s-)=\int_0^t A x(s) x^T(s-)ds+\int_0^t 
B dM(s) x^T(s-).
\end{align*}
Since the Ito integrals have mean zero, we have  \begin{align*}
\mathbb E\left[x(t) x^T(t)\right]=\int_0^t \mathbb 
E\left[x(s) x^T(s)\right] A^T ds+\int_0^t A \mathbb 
E\left[x(s) x^T(s)\right] ds+\left(\mathbb E[e_i^T 
x, e_j^T x]_t\right)_{i, j=1, \ldots, n},
\end{align*}
where we replaced $x(s-)$ by $x(s)$. This does not impact the 
integrals since a c\`adl\`ag process has at most countably many jumps on a 
finite time interval (see \cite[Theorem 2.7.1]{applebaumendlich}). 
Applying Corollary \ref{iotprodformelmatpro} again, the 
stochastic differential of $BM(t)M^T(t)B^T$ is given by: 
\begin{align*}
BM(t)(BM(t))^T=&\int_0^t BM(s-) dM^T(s) B^T+B\int_0^t dM(s) 
(BM(s-))^T\\&+\left([e_i^T BM,e_j^T BM]_t\right)_{i, j=1, \ldots, n},
\end{align*}
Taking the expected value, we have $
B\mathbb E[M(t)M^T(t)]B^T=\mathbb E\left([e_i^T BM,e_j^TBM]_t\right)_{i, 
j=1, \ldots, n}$. In \cite[Theorem 4.44]{zabczyk} it was shown that the covariance function is 
linear in $t$, i.e. $\mathbb E[M(t)M^T(t)]=\mathcal Q_M t$. Since the $i$th 
component \begin{align*}
    e_i^Tx(t)=e_i^T\int_0^t Ax(s)ds+e_i^T BM(t),\;\;\;t\geq 0,
 \end{align*}
has the same jumps and the same martingale part as $e_i^T BM$, we know by 
(\ref{decomqucov}) that $[e_i^T BM,e_j^TBM]_t=[e_i^T x, e_j^T x]_t$ for $i, j=1, \ldots, n$. Summarizing the results, we have 
\begin{align*}
\mathbb E\left[x(t) x^T(t)\right]=\int_0^t \mathbb 
E\left[x(s) x^T(s)\right] A^T ds+\int_0^t A \mathbb 
E\left[x(s) x^T(s)\right] ds+B\mathcal Q_M B^T t,
\end{align*}
which concludes the proof.
\end{proof}

\noindent To find a representation for $P(t)$ we need the following straightforward result.
\begin{prop}\label{prodeinfachh}
Let $A_i\in\mathbb R^{d_i\times d_i}$ and $K_i\in \mathbb R^{d_i\times m}$, then \begin{align*}
\expn^{A_1t}K_1 K_2^T \expn^{A_2^T t}=K_1 K_2^T+A_1 \int_0^t\expn^{A_1s}K_1 
K_2^T \expn^{A_2^Ts}ds+ \int_0^t\expn^{A_1s}K_1 K_2^T \expn^{A_2^Ts}ds A_2^T.
\end{align*}
\end{prop}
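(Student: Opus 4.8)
The plan is to verify the claimed identity by differentiating both sides with respect to $t$ and checking that they agree at $t=0$. At $t=0$ both sides equal $K_1 K_2^T$, so it suffices to show that the derivatives coincide. For the left-hand side I would use the product rule together with $\frac{d}{dt}\expn^{A_1 t}=A_1\expn^{A_1 t}=\expn^{A_1 t}A_1$ and similarly for $\expn^{A_2^T t}$, which gives
\begin{align*}
\frac{d}{dt}\left(\expn^{A_1 t}K_1 K_2^T\expn^{A_2^T t}\right)
= A_1\expn^{A_1 t}K_1 K_2^T\expn^{A_2^T t}+\expn^{A_1 t}K_1 K_2^T\expn^{A_2^T t}A_2^T.
\end{align*}
For the right-hand side, the constant term $K_1 K_2^T$ differentiates to zero, and by the fundamental theorem of calculus each of the two integral terms contributes $A_1\expn^{A_1 t}K_1 K_2^T\expn^{A_2^T t}$ and $\expn^{A_1 t}K_1 K_2^T\expn^{A_2^T t}A_2^T$, respectively. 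Hence the derivatives agree for all $t$, and since the values agree at $t=0$, the two sides are equal.

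An alternative route, which avoids invoking a uniqueness-of-ODE argument, is to start directly from the integral on the right: set $F(s):=\expn^{A_1 s}K_1 K_2^T\expn^{A_2^T s}$ and observe that $F'(s)=A_1 F(s)+F(s)A_2^T$. Integrating this identity over $[0,t]$ and using $F(0)=K_1 K_2^T$ yields $F(t)-K_1 K_2^T=A_1\int_0^t F(s)\,ds+\int_0^t F(s)\,ds\,A_2^T$, which is exactly the claimed formula after rearranging; note that $A_1$ and $A_2^T$ may be pulled outside the integrals since they are constant matrices.

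There is essentially no obstacle here — the statement is, as the authors say, "straightforward" — so the only thing to be careful about is the bookkeeping with matrix dimensions (the products $K_1 K_2^T\in\mathbb R^{d_1\times d_2}$, $A_1$ acting on the left, $A_2^T$ on the right) and the fact that a matrix commutes with its own exponential, which justifies writing $\frac{d}{dt}\expn^{A_i t}$ on either side. Both arguments are complete as sketched; I would present the second one, since it is self-contained and makes transparent why this identity is the finite-time analogue of the Lyapunov-type relation that will be used to represent $P(t)$.
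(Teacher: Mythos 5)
Your second argument is exactly the paper's proof: apply the product rule to $F(s)=\expn^{A_1 s}K_1K_2^T\expn^{A_2^Ts}$ and integrate over $[0,t]$, pulling the constant matrices $A_1$ and $A_2^T$ out of the integrals. Both of your routes are correct, and the one you would present coincides with the authors' own.
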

\begin{proof}
 The product rule yields \begin{align*}
d\left(\expn^{A_1t}K_1 K_2^T \expn^{A_2^Tt}\right)=A_1 \expn^{A_1t}K_1 K_2^T 
\expn^{A_2^Tt} dt+ \expn^{A_1t}K_1 K_2^T \expn^{A_2^T t} A_2^T dt,
                   \end{align*}
and integrating gives the result.
\end{proof}

\noindent Setting $A_1=A_2=A$ and $K_1=K_2=B\mathcal Q_M^{\frac{1}{2}}$ in Proposition 
\ref{prodeinfachh}, we see that $\int_0^t\expn^{As}B\mathcal Q_MB^T 
\expn^{A^Ts}ds$ solves the differential equation (\ref{matrixdiffgl}). Since 
the solution to (\ref{matrixdiffgl}) is unique, we have 
\begin{align}\label{finitreachgram}
 P(t)=\int_0^t\expn^{As}B\mathcal Q_MB^T \expn^{A^Ts}ds,\;\;\;t\geq 0.         
  \end{align}
Consequently, $x^TP(t)x$ is an increasing function. If $\mathcal Q_M = I$, then we obtain the reachability Gramian of the deterministic setting (\ref{detsysant}), see \cite{antoulas}. This is also the case if $M$ is a standard Wiener process.

The finite reachability Gramian $P(t)$ provides information about the reachability of a state which we see from the following identity:
\begin{align}\label{supkleinpt}
 \max_{t\in[0, T]}\mathbb E \left\langle x(t, 0, M), x \right\rangle^2_{\mathbb 
R^n}=x^T P(T)x\quad \text{for} \quad x\in\mathbb R^n.
\end{align}
Consequently, we know that $\left\langle x(t, 0, M), x 
\right\rangle_{\mathbb R^n}=0$, $t\in [0, T]$, $\mathbb P$ a.s. if and only if 
$x\in \kernel P(T)$ meaning that $x(t, 0, M)$ is orthogonal to $\kernel 
P(T)$. Since $P(T)$ is symmetric positive semidefinite, we have $\left(\kernel 
P(T)\right)^\perp=\im P(T)$ and hence \begin{align}\label{impreachset}
      \mathbb P\left\{x(t, 0, M)\in \im P(T),\;\;\;t\in[0, T]\right\}=1,
\end{align}
We observe from (\ref{impreachset}) that all the states that are not in $\im 
P(T)$ are not reachable and thus they do not contribute to the system 
dynamics. As a first step to reduce the system dimension it is necessary to 
remove all the states that are not in $\im P(T)$. We will see in the next 
Proposition, that the finite reachability Gramians can be replaced by the infinite 
Gramian 
\begin{align}\label{infinitreachgram}
P=\int_0^\infty\expn^{As}B\mathcal Q_MB^T \expn^{A^Ts}ds        
\end{align}
since their images coincide. This (infinite) Gramian exists due to the asymptotic stability of $A$. It is easier to work with $P$ since it can be computed as the unique solution to 
\begin{align}\label{lyapeqreach}
           AP+P A^T=-B\mathcal Q_MB^T.                               
\end{align}
$P$ satisfies (\ref{lyapeqreach}) since $P(t)$ satisfies (\ref{matrixdiffgl}) and $\dot P(t)=\expn^{At}B\mathcal Q_MB^T 
\expn^{A^Tt}\rightarrow 0$ if $t\rightarrow \infty$ due the asymptotic 
stability of $A$. For the case $\mathcal Q_M=I$ this Gramian was discussed in 
\cite[Section 4.3]{antoulas} in the context of balancing for deterministic 
systems (\ref{detsysant}).
\begin{prop}\label{refsameimbla}
The images of the finite reachability Gramians $P(t)$, $t>0$, and the 
infinite reachability Gramian $P$ are the same, that is,
\[
\im P(t)=\im P\;\;\;\text{for all } t> 0.
\]
\end{prop}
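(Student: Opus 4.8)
The plan is to show the two inclusions $\im P(t) \subseteq \im P$ and $\im P \subseteq \im P(t)$ for each fixed $t>0$, using the explicit integral representations \gref{finitreachgram} and \gref{infinitreachgram}. Equivalently, since both $P(t)$ and $P$ are symmetric positive semidefinite, it suffices to prove that they have the same kernel, i.e. $\kernel P(t) = \kernel P$, because for such matrices $(\kernel M)^\perp = \im M$. Working with kernels is cleaner: for a symmetric positive semidefinite matrix given as $\int_0^a \expn^{As} R \expn^{A^Ts}\,ds$ with $R = B\mathcal Q_M B^T \succeq 0$, a vector $x$ lies in the kernel precisely when the nonnegative integrand $x^T \expn^{As} R \expn^{A^Ts} x = \lVert R^{1/2}\expn^{A^Ts}x\rVert^2$ vanishes for (almost) all $s$ in the integration range, and by continuity for \emph{all} $s$ in that range.

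First I would record this pointwise characterisation: $x \in \kernel P(t)$ iff $R^{1/2}\expn^{A^Ts}x = 0$ for all $s\in[0,t]$, and $x\in\kernel P$ iff $R^{1/2}\expn^{A^Ts}x = 0$ for all $s\in[0,\infty)$. The inclusion $\kernel P \subseteq \kernel P(t)$ is then immediate since $[0,t]\subseteq[0,\infty)$. For the reverse inclusion, the key observation is an analyticity/unique-continuation argument: the function $s \mapsto R^{1/2}\expn^{A^Ts}x$ is real-analytic in $s$ (entries are finite linear combinations of terms $s^j \expn^{\mu s}$ coming from the Jordan form of $A^T$). If it vanishes on the nondegenerate interval $[0,t]$, all its derivatives at $s=0$ vanish, hence it vanishes identically on $[0,\infty)$. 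Differentiating $R^{1/2}\expn^{A^Ts}x$ repeatedly at $s=0$ shows this is equivalent to $R^{1/2}(A^T)^k x = 0$ for all $k\ge 0$, a Kalman-type condition that is manifestly independent of $t$; this gives $\kernel P(t) \subseteq \kernel P$ and in fact shows all the $\kernel P(t)$, $t>0$, coincide. Taking orthogonal complements yields $\im P(t) = \im P$ for all $t>0$.

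An alternative route, perhaps closer in spirit to the deterministic literature the authors cite, is to use the semigroup/cocycle identity $P(t+h) = P(h) + \expn^{Ah}P(t)\expn^{A^Th}$ (which follows from \gref{finitreachgram} by splitting the integral and substituting), together with the fact from the text that $x^T P(t) x$ is nondecreasing in $t$. Monotonicity of the images $\im P(t)$ combined with their being nested subspaces of the finite-dimensional space $\mathbb R^n$ forces the chain $\im P(t)$ to stabilise at some finite $t^*$; then one argues the stabilised subspace is $A^T$-invariant and equals $\im P$ via the Lyapunov equation \gref{lyapeqreach}. I expect the main obstacle to be purely expository: making the unique-continuation step fully rigorous (invoking real-analyticity of the matrix exponential, or equivalently that a nonzero entire function cannot vanish on an interval) without belaboring it, and being careful that the "vanishes almost everywhere $\Rightarrow$ vanishes everywhere" step is justified by continuity of the integrand. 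The algebra itself is routine; the only subtlety is the jump from "zero on $[0,t]$" to "zero on $[0,\infty)$", which is exactly where analyticity (rather than mere smoothness) is essential.
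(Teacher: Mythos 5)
Your proposal is correct and follows essentially the same route as the paper: reduce to equality of kernels, get $\kernel P\subseteq\kernel P(t)$ from monotonicity of the integral, and get the reverse inclusion by observing that the nonnegative analytic integrand vanishing on $[0,t]$ forces it to vanish on all of $[0,\infty)$. The alternative semigroup-identity route you sketch is not needed; the analyticity argument is exactly what the paper uses.
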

\begin{proof}
Since $P$ and $P(t)$ are symmetric positive semidefinite, it is enough 
to show that their kernels are equal. Let $v\in \kernel 
P$. This implies $0\leq v^T P(t) v\leq v^T P v=0$,
since $t\mapsto v^T P(t) v$ is increasing. Hence $v\in \kernel P(t)$. On the other hand, if $v\in \kernel P(t)$, we have 
$
0=v^T P(t) v= \int_0^t v^T \expn^{As}B\mathcal Q_MB^T \expn^{A^Ts} v ds.
$
Consequently, $v^T \expn^{As}B\mathcal Q_MB^T \expn^{A^Ts} v=0$ for all $s\in 
[0, t]$.  Since the entries of $\expn^{As}B\mathcal Q_MB^T \expn^{A^Ts}$ are 
analytic functions, the scalar function $f(t):=v^T\expn^{As}B\mathcal 
Q_MB^T \expn^{A^Ts} v$ is analytic, such that $f\equiv 0$ on $[0, \infty)$. 
Thus, 
$
0=\int_0^\infty v^T\expn^{As}B\mathcal Q_MB^T \expn^{A^Ts}v ds=v^T P v,
$
and the result follows.
\end{proof}

\noindent Let us now assume that we already removed all the unreachable states 
from (\ref{stochstatenew}). So, (\ref{completereach}) holds which 
implies that $\im P=\mathbb R^n$. We choose an orthonormal basis of $\mathbb R^n$, consisting of eigenvectors $\{p_k\}_{k=1}^n$ of $P$, and the 
following representation holds: 
\begin{align}\label{eq:expansion}
 x(t, 0, M)=\sum_{k=1}^n  \left\langle x(t, 0, M), p_k \right\rangle_{\mathbb 
R^n} p_k.
\end{align}
We investigate how much the noise influences $x(t, 0, M)$ in the direction of $p_k$. If a state remains close to zero, it barely contributes to the system 
dynamics. Those states can be identified with the help of the positive 
eigenvalues $\{\lambda_k\}_{k=1}^n$ of $P$. Using (\ref{supkleinpt}) and the fact 
that $P(T)$ is increasing, we obtain \begin{align}\label{interlambad}
 \max_{t\in[0, T]}\mathbb E \left\langle x(t, 0, M), p_k\right\rangle^2_{\mathbb 
R^n}=p_k^T P(T) p_k\leq p_k^T P p_k=\lambda_k.\end{align}
Hence, if $\lambda_k$ is small, then the the corresponding 
coefficient $\left\langle x(t, 0, M), p_k\right\rangle_{\mathbb 
R^n}$ in (\ref{eq:expansion}) is small (in the $L^2(\Omega, \mathcal{F}, \mathbb{P})$ sense). This 
means that the noise hardly steers the state in the direction of $p_k$. 
Consequently, the states that are difficult to reach are contained in the space 
spanned by the eigenvectors corresponding to the small eigenvalues of $P$. 


We continue by reasoning why using the modified reachability 
Gramian $P$ is better than using the reachability Gramian 
$P_D=\int_0^\infty\expn^{As}BB^T \expn^{A^Ts}ds$ ($\mathcal Q_M=I$) of the 
deterministic system (\ref{detsysant}). 
\begin{prop}\label{propdworse}
The following properties hold for the (modified) reachability Gramians $P$ and $P_D$:
\begin{itemize}
\item[(a)]In general, we have  $\im P \subseteq \im P_D.$                         
\item[(b)] If $\mathcal Q_M>0$ (positive definite), then $\im P = \im P_D$.                         
\item[(c)] If $B^T\ker P\neq \left\{0\right\}$, then 
$\im P \subset \im P_D$.                         
 \end{itemize}
\end{prop}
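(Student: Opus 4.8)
The plan is to push everything down to kernels. Since $P$ and $P_D$ are symmetric positive semidefinite, $\im P=(\kernel P)^\perp$ and likewise for $P_D$, so that $\im P\subseteq\im P_D$ is equivalent to $\kernel P_D\subseteq\kernel P$, and equality of the images is equivalent to equality of the kernels. The single workhorse I would isolate first is the analyticity argument already used in the proof of Proposition \ref{refsameimbla}: for $v\in\R^n$ the function $s\mapsto v^T\expn^{As}B\mathcal Q_M B^T\expn^{A^Ts}v=(B^T\expn^{A^Ts}v)^T\mathcal Q_M(B^T\expn^{A^Ts}v)$ is nonnegative and analytic on $[0,\infty)$, so $v^TPv=\int_0^\infty(\cdots)\,ds=0$ forces it to vanish identically. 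The same holds with $\mathcal Q_M$ replaced by $I$, giving the clean criterion $v^TP_Dv=0\iff B^T\expn^{A^Ts}v\equiv 0$ on $[0,\infty)$.

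For part (a) I would take $v\in\kernel P_D$, so that $B^T\expn^{A^Ts}v\equiv 0$; substituting this into the integral representation \gref{infinitreachgram} of $P$ makes the integrand identically zero, hence $v^TPv=0$ and $v\in\kernel P$ because $P\succeq 0$. This proves $\kernel P_D\subseteq\kernel P$, i.e. $\im P\subseteq\im P_D$, which is the inclusion that holds in full generality. Parts (b) and (c) then both amount to deciding whether the reverse inclusion $\kernel P\subseteq\kernel P_D$ holds.

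For (b), assuming $\mathcal Q_M>0$, take $v\in\kernel P$; then $0=v^TPv=\int_0^\infty(B^T\expn^{A^Ts}v)^T\mathcal Q_M(B^T\expn^{A^Ts}v)\,ds\ge\lambda_{\min}(\mathcal Q_M)\int_0^\infty\|B^T\expn^{A^Ts}v\|^2\,ds$, and since $\lambda_{\min}(\mathcal Q_M)>0$ this forces $B^T\expn^{A^Ts}v\equiv 0$, i.e. $v^TP_Dv=0$ and $v\in\kernel P_D$. Hence $\kernel P=\kernel P_D$ and $\im P=\im P_D$. For (c), by (a) it suffices to exhibit a vector in $\kernel P$ that is \emph{not} in $\kernel P_D$. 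By hypothesis there is $v\in\kernel P$ with $B^Tv\neq 0$; if $v$ also lay in $\kernel P_D$, the criterion above would give $B^T\expn^{A^Ts}v\equiv 0$ and, evaluating at $s=0$, $B^Tv=0$, a contradiction. Therefore $\kernel P_D\subsetneq\kernel P$, equivalently $\im P\subsetneq\im P_D$.

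The only genuinely delicate point is keeping the direction of the inclusions straight: $\kernel P_D\subseteq\kernel P$ always holds, because inserting the factor $\mathcal Q_M\succeq 0$ can only enlarge the null space of the Gramian relative to the deterministic one; the reverse inclusion is the substantive content and needs either positive definiteness of $\mathcal Q_M$ (part (b)) or is exactly obstructed by the condition $B^T\kernel P\neq\{0\}$ (part (c)). Beyond that, every step is a direct application of the nonnegativity/analyticity observation borrowed from the proof of Proposition \ref{refsameimbla}, so no new machinery is required.
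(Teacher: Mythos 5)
Your proof is correct and follows essentially the same route as the paper: reduce images to kernels of the positive semidefinite Gramians, establish $\kernel P_D\subseteq\kernel P$ from the integral representations, and use positive definiteness of $\mathcal Q_M$ (resp.\ the hypothesis $B^T\kernel P\neq\{0\}$) to settle the reverse inclusion. The only difference is in (c), where you argue by contrapositive from the criterion $v^TP_Dv=0\Rightarrow B^T\expn^{A^Ts}v\equiv 0$ evaluated at $s=0$, whereas the paper shows $v^TP_Dv>0$ directly via the analyticity/countably-many-zeros argument; your version is a slight streamlining of the same idea.
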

\begin{proof}
  Let $v\in \kernel P_D$, then 
\begin{align*}
0=v^T P_Dv= \int_0^\infty v^T\expn^{As}BB^T \expn^{A^Ts}v ds=\int_0^\infty 
\left\|B^T \expn^{A^Ts}v\right\|^2_{\mathbb R^n} ds,
\end{align*}
which is equivalent to $B^T \expn^{A^Ts}v\equiv 0$ on $\mathbb R_+$  and  
implies $\mathcal Q_M^\frac{1}{2} B^T \expn^{A^Ts}v\equiv 0$ on $\mathbb R_+$. 
Equivalently, we have 
\begin{align*}
0=\int_0^\infty \left\|\mathcal Q_M^\frac{1}{2}B^T 
\expn^{A^Ts}v\right\|^2_{\mathbb R^n} ds=v^T Pv,
\end{align*}
and since $v\in\kernel P$ if and only if $0=v^T Pv$, we have $\kernel P_D\subseteq 
\kernel P$. Consequently, we obtain $\im P\subseteq \im P_D$ due to  
$\left(\kernel P\right)^\perp =\im P$ and 
$\left(\kernel P_D\right)^\perp=\im P_D$.

If $\mathcal Q_M>0$, then $\mathcal Q_M^\frac{1}{2} B^T 
\expn^{A^Ts}v\equiv 0$ on $\mathbb R_+$ implies $B^T \expn^{A^Ts}v\equiv 0$ on 
$\mathbb R_+$. In this case, all the above statements are equivalent. Therefore $\kernel P_D= \kernel P$ and hence $\im P=\im P_D$.

To prove (c), assume $v\in \ker P$. Pre- and postmultiplying (\ref{lyapeqreach}) with $v^T$ and $v$, respectively, yields 
\begin{align*}
0=v^TB\mathcal Q_MB^Tv= \left\|\mathcal Q_M^{\frac{1}{2}}B^T 
v\right\|^2_{\mathbb R^n}.
\end{align*}
This implies $\mathcal Q_M^{\frac{1}{2}}B^Tv=0$ but if $B^T\ker P\neq 
\left\{0\right\}$, then there is a $v\in \kernel P$ such that $B^Tv\neq 0$. We 
set $f(t):=B^T \expn^{A^T t}v$, $t\geq 0$, and observe that $f$ is an analytic 
function that is not constantly zero since $f(0)=B^Tv\neq 0$. Consequently, $f$ 
has only countably many zeros such 
that $\left\|B^T\expn^{A^Ts}v\right\|^2_{\mathbb R^n}$ is a purely positive 
function up to Lebesgue zero sets. Hence, \begin{align*}
0<\int_0^\infty \left\|B^T \expn^{A^Ts}v\right\|^2_{\mathbb R^n} ds=v^T P_Dv,
\end{align*}
such that $v\not \in \ker P_D$. Having $\ker P_D\subset \ker P$ implies (c).
\end{proof}

\noindent By (\ref{impreachset}) and Proposition \ref{propdworse} (a), we obtain 
\begin{align}\label{pdnaja}
      \mathbb P\left\{x(t, 0, M)\in \im P_D,\;\;\;t\in[0, T]\right\}=1.
\end{align}

One could now think of using $P_D$ instead of $P$ but from (\ref{pdnaja}) not all unreachable states can be identified especially if case (c) in Proposition \ref{propdworse} holds. Hence, if we were to use $P_D$, we would underestimate the set of unreachable states. Even if we assume that the system is already completely reachable (e.g. (\ref{completereach}) holds), inequality (\ref{interlambad}) cannot be obtained with $P_D$. This means that we cannot identify the difficult to reach states with the help of the eigenvalues of $P_D$.
In Section \ref{btandspa} we will see that $P$, rather than $P_D$, enters the error bound for the ROM. 

Finally we note that the reachability Gramian $P_D$ of system (\ref{detsysant}) does not depend on the input $u$. If a ``noisy control'' is used, this does not apply, since $P$ depends on $\mathcal{Q}_M$ and hence on the L\'evy process $M$. 

\paragraph{Observability} We conclude this section by introducing a  deterministic observability 
concept for the output equation \begin{align*}
y(t, x_0, 0)= C x(t, x_0, 0),\;\;\;t\geq 0.
\end{align*}
corresponding to (\ref{stochstatenew}) with $M\equiv 0$. We 
recall known facts from \cite[Subsection 4.2.2]{antoulas} to characterise the importance of certain initial states in the system dynamics since we 
are in a situation without noise. We assume to have an unknown initial 
state $x_0\in\mathbb R^n$ in the following observation problem and aim to reconstruct $x_0$ from the observation $y$ on the entire time interval $[0, 
\infty)$. 
\begin{defn}
An initial state $x_0$ is not observable if $y(\cdot, x_0, 
0)\equiv 0$ on $[0, 
\infty)$, i.e. it cannot be reconstructed by the observation. Otherwise, $x_0$ is 
called observable. A system a called completely observable if every initial state is observable.
\end{defn}

\noindent In order to determine the observability of a state, we consider the energy that is caused by the observations of $x_0$: 
\begin{align}\label{observenergy}
 \int_0^\infty \left\| y(t, x_0, 0)\right\|^2_{\mathbb R^p}dt=x_0^T 
\int_0^\infty \expn^{A^T t}C^T C \expn^{At} dt\;x_0=x_0^T Q x_0,
\end{align}
where we used that $x(t, x_0, 0)=\expn^{At}x_0$ and set $Q=\int_0^\infty \expn^{A^T t}C^T C \expn^{At} dt$. The observability Gramian $Q$ exists due to the asymptotic stability of $A$ and is the unique solution to 
\begin{align}\label{observlyap}
         A^TQ+Q A=-C^T C.                                                        
          \end{align}
The above relation is obtained by replacing $A$ and $B\mathcal Q_MB^T$ in 
(\ref{infinitreachgram}) and (\ref{lyapeqreach}) by $A^T$ and $C^T C$, respectively. 

From (\ref{observenergy}) we see that $x_0$ is unobservable if and only if 
$x_0\in\ker Q$. Hence, the system is completely observable if and only if 
$\ker Q=\left\{0\right\}$. Besides the unobservable states we aim to remove the 
difficult to observe states from the system in order to obtain an accurate 
ROM. The difficult to observe states are those producing only 
little observation energy, i.e. the corresponding observations $y$ are close to 
zero in the $L^2$ sense. Using (\ref{observenergy}) again, the difficult to 
observe states are contained in the eigenspaces spanned by the eigenvectors of 
$Q$ corresponding to the small eigenvalues.

\subsection{Balancing related MOR}\label{subsecprocedure}

Before considering balanced truncation (BT) and singular perturbation approximation (SPA) we summarise the general theory for balancing and how to find a balancing transformation. 

States that are difficult to reach have large components in the span of the 
eigenvectors corresponding to small eigenvalues of the reachability Gramian 
$P$, cf. (\ref{interlambad}). Similarly, states that are difficult to 
observe are the ones that have large components in the span of eigenvectors 
corresponding to small eigenvalues of the observability Gramian $Q$, see 
(\ref{observenergy}). Hence in order to produce accurate ROMs 
one eliminates states that are both difficult to reach and difficult to observe. 
To this end we need to find a basis in which the dominant reachable and 
observable states are the same, which is done by a simultaneous transformation 
of the Gramians. 

Let $T\in\mathbb{R}^{n\times n}$ be a nonsingular matrix. Transforming the states using 
\[\hat{x}(t) = Tx(t),\] 
the system (\ref{linsysafterdisintr2}) becomes
\begin{align}\label{balancingtransformation}
d\hat{x}(t)&=\hat{A} \hat{x}(t)dt+ \hat{B}dM(t),\\ \nonumber
y(t)&= \hat{C} \hat{x}(t),
\end{align}
where $\hat{A} = TAT^{-1}$, $\hat{B}=TB$, $\hat{C} =CT^{-1}$. The input-output 
map remains the same, only the state, input and output matrices are 
transformed.  

$P$ and $Q$, the reachability and observability Gramians of the original systems which satisfy (\ref{lyapeqreach}) and (\ref{observlyap}) can be transformed 
into reachability and observability Gramians of the transformed system $\hat{P} 
=TPT^T $ and $\hat{Q} = T^{-T}QT^{-1}$ (by multiplying (\ref{lyapeqreach}) with $T$ from the left and $T^T$ from the right and (\ref{observlyap}) with $T^{-T}$ from the left and $T^{-1}$ from the right). The Hankel singular values (HSVs) 
of $\sigma_1\ge\ldots\ge\sigma_n$, where $\sigma_i 
=\sqrt{\lambda_i(PQ)},\,i=1,\ldots,n$ for the original and transformed system 
are the same. The above transformation is a balancing transformation if the 
transformed Gramians are equal to each other and diagonal. Such a transformation 
always exists if $P, Q>0$ and can be obtained by choosing 
\[
T=\Sigma^{-\frac{1}{2}}U^T L^T \quad\text{and}\quad T^{-1}=KV\Sigma^{-\frac{1}{2}},
\]
where $\Sigma=\diag(\sigma_{1},\ldots,\sigma_n)$ are the HSVs. $Y$, $Z$, $L$ and $K$ are computed as follows. Let $P = KK^T$, $Q=LL^T$ 
be square root factorisations of $P$ and $Q$, then an SVD of $K^TL = V\Sigma 
U^T$ gives the required matrices. With this transformation 
$\hat{P}=\hat{Q}=\Sigma$.  

Below, let $T$ be the balancing transformation as stated above, then we 
partition the coefficients of the balanced realisation as 
follows:\begin{align}\label{balancedrels}
T{A}T^{-1}= \mat{cc}{A}_{11}&{A}_{12}\\ 
{A}_{21}&{A}_{22}\rix,\;\;\; T{B} = \mat{c}{B}_1\\ {B}_2\rix,\;\;\;  
{CT^{-1}} = \mat{cc}{C}_1 &{C}_2\rix, \end{align}
where ${A}_{11}\in\R^{r\times r}$ etc.                          
Furthermore, setting $\hat x= \mat{cc}{x}_1\\ 
{x}_2\rix$, where $x_1(t)\in\mathbb{R}^r$, we obtain the transformed partitioned system 
\begin{align}
\mat{cc}d{x}_1(t)\\d{x}_2(t)\rix &=\mat{cc}{A}_{11}&{A}_{12}\\ 
{A}_{21}&{A}_{22}\rix 
\mat{cc}{x}_1(t)\\ {x}_2(t)\rix dt+ \mat{c}{B}_1\\ {B}_2\rix dM(t),\label{balrelpart}\\ 
y(t)&= \mat{cc}{C}_1 &{C}_2\rix \mat{cc}{x}_1(t)\\ 
{x}_2(t)\rix.\label{balrelpartout}
\end{align}
In this system, the difficult to reach and observe states are 
represented by $x_2$, which correspond to the smallest HSVs  $\sigma_{r+1}, \ldots, \sigma_n$, but of course $r$ has  
to be chosen such that the neglected HSVs are small 
($\sigma_{r+1}\ll\sigma_{r}$). 

We discuss two methods (BT and SPA) to neglect $x_2$ leading to a reduced system of the form
\begin{align}\label{generalreducedsys}
dx_r(t)&=A_r x_r(t)dt+ B_rdM(t),\\ \nonumber
y_r(t)&= C_r x_r(t),
\end{align}
where $A_r\in\mathbb R^{r\times r}$, $B_r\in\mathbb R^{r\times m}$ and 
$C_r\in\mathbb R^{p\times r}$ ($r\ll n$). 

\paragraph{Balanced truncation} For BT the second row in (\ref{balrelpart}) is truncated and the remaining $x_2$ components in the first row and in (\ref{balrelpartout}) are set to zero. This leads to reduced coefficients 
\begin{align*}
 (A_r, B_r, C_r)=(A_{11}, B_1, C_1),
\end{align*}
which is similar to the deterministic case. The next lemma states that BT preserves asymptotic stability, which is known from the deterministic case, see \cite[Theorem 7.9]{antoulas}. 
\begin{lem}\label{btpresstab}
Let the Gramians $P$ and $Q$ be positive definite and $\sigma_r\neq \sigma_{r+1}$, 
then $\sigma\left(A_{ii}\right)\subset \mathbb C_-$ for $i=1, 2$, i.e. $A_{11}$ 
and $A_{22}$ are asymptotically stable. 
\end{lem}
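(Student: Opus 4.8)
The plan is to mimic the classical deterministic argument for balanced truncation preserving stability (as in \cite[Theorem 7.9]{antoulas}), adapting it to the present additive-noise setting. The key observation is that, after balancing, the Gramians satisfy $\hat P = \hat Q = \Sigma = \operatorname{diag}(\Sigma_1,\Sigma_2)$ with $\Sigma_1 = \operatorname{diag}(\sigma_1,\ldots,\sigma_r)$ and $\Sigma_2 = \operatorname{diag}(\sigma_{r+1},\ldots,\sigma_n)$, and the balanced Lyapunov equations $\hat A\Sigma + \Sigma\hat A^T = -\hat B\mathcal Q_M\hat B^T$ and $\hat A^T\Sigma + \Sigma\hat A = -\hat C^T\hat C$ decompose blockwise. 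Reading off the $(1,1)$ block of the first equation gives
\begin{align}\label{btstabproof1}
A_{11}\Sigma_1 + \Sigma_1 A_{11}^T = -B_1\mathcal Q_M B_1^T \le 0,
\end{align}
and the $(1,1)$ block of the second gives $A_{11}^T\Sigma_1 + \Sigma_1 A_{11} = -C_1^T C_1 \le 0$; analogous identities hold for $A_{22}$ and $\Sigma_2$. Since $\Sigma_1,\Sigma_2 > 0$ (as $P,Q>0$ forces all $\sigma_i>0$), each $A_{ii}$ solves a Lyapunov inequality with a positive definite ``solution'' $\Sigma_i$, so a standard Lyapunov-type argument shows $\sigma(A_{ii})\subseteq \overline{\mathbb C_-}$ (no eigenvalues in the open right half-plane).

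The remaining work is to rule out purely imaginary eigenvalues of $A_{ii}$. First I would argue, exactly as in the deterministic case, that if $A_{ii}$ had an eigenvalue on the imaginary axis then (using \eqref{btstabproof1} and its observability counterpart) the corresponding eigenvector would lie in the kernel of $B_i^T\mathcal Q_M^{1/2}$-type and $C_i$-type expressions simultaneously; one then derives a contradiction with the fact that the blocks of $\Sigma$ are separated, i.e. $\sigma_r \neq \sigma_{r+1}$. Concretely: suppose $A_{11} v = \mathrm{i}\omega v$, $v\neq 0$. Pre/postmultiplying \eqref{btstabproof1} by $v^*$ and $v$ shows $B_1^T\mathcal Q_M^{1/2} = 0$ acting on $v$ in the relevant sense, hence $A_{11}\Sigma_1 v = -\Sigma_1 A_{11}^T v = -\mathrm{i}\omega\,\overline{(\cdot)}$... — more carefully, one gets that $\Sigma_1 v$ is also an eigenvector, and combining the reachability and observability block equations yields $A_{11}\Sigma_1^2 = \Sigma_1^2 A_{11}$ on the generalized eigenspace, forcing $\Sigma_1$ restricted there to commute with $A_{11}$; one then transfers the argument to the off-diagonal blocks $A_{12}, A_{21}$ to conclude that the $\mathrm{i}\omega$-eigenspace of the full $\hat A$ splits, contradicting the stability of $\hat A = TAT^{-1}$ (which has the same spectrum as the asymptotically stable $A$). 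The separation $\sigma_r\neq\sigma_{r+1}$ is what guarantees the block decoupling is genuine.

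The main obstacle I anticipate is the purely-imaginary-eigenvalue exclusion: the semidefinite Lyapunov inequality alone only gives $\sigma(A_{ii})\subseteq\overline{\mathbb C_-}$, and the extra bookkeeping needed to leverage both the reachability and observability block equations together with $\sigma_r\neq\sigma_{r+1}$ is the delicate part. Everything else — the block decomposition of the balanced Lyapunov equations and positivity of $\Sigma_i$ — is routine linear algebra. It is worth remarking that the presence of $\mathcal Q_M$ in place of the identity in the reachability Lyapunov equation does not affect the argument at all, since $B_1\mathcal Q_M B_1^T$ is still symmetric positive semidefinite; thus the proof is structurally identical to the deterministic one, and one could simply cite \cite[Theorem 7.9]{antoulas} after noting this, filling in only the modifications caused by $\mathcal Q_M \neq I$.
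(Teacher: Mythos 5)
Your proposal is correct and takes essentially the same route as the paper, which in fact offers no proof of this lemma at all and simply cites the deterministic result \cite[Theorem 7.9]{antoulas} --- exactly your closing observation, justified because after balancing the reachability equation reads $\hat A\Sigma+\Sigma\hat A^T=-\tilde B\tilde B^T$ with $\tilde B=\hat B\mathcal Q_M^{1/2}$, so the classical argument carries over verbatim. The only slip in your sketch is the order in which the two block equations are used in the imaginary-axis step: with $A_{11}v=\mathrm{i}\omega v$ you must first invoke the observability block $A_{11}^T\Sigma_1+\Sigma_1A_{11}=-C_1^TC_1$ to obtain $C_1v=0$ and $A_{11}^T(\Sigma_1v)=-\mathrm{i}\omega\,\Sigma_1v$, and only then apply the reachability block to $\Sigma_1v$ to get $\mathcal Q_M^{1/2}B_1^T\Sigma_1v=0$ and $A_{11}\Sigma_1^2v=\mathrm{i}\omega\,\Sigma_1^2v$, after which the off-diagonal blocks and $\sigma_r\neq\sigma_{r+1}$ yield the contradiction with the asymptotic stability of $\hat A$.
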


\noindent The above lemma is vital for the error bound analysis in Section 
\ref{btandspa}.

\paragraph{Singular perturbation approximation} Instead of 
setting $x_2\equiv 0$, one assumes $\dot x_2\equiv 0$. This idea originates from the deterministic case, where it can be observed that $x_2$ are the fast 
variables meaning that they are in a steady state after a short time. In our 
framework, the classical derivative of $x_2$ does not exist but we proceed with 
setting $dx_2\equiv 0$ in (\ref{balrelpart}). This yields an algebraic constraint
\begin{align}\label{algbraicconst}
0=\int_0^t\mat{cc}{A}_{21}&{A}_{22}\rix \mat{cc}{x}_1(s)\\ {x}_2(s)\rix 
ds+{B}_2 M(t)=:R(t),
\end{align}
where we assumed zero initial conditions. Applying Ito's product formula (\ref{profriot}) 
to every summand of $R^T R=R_1^2+\ldots+R_{n-r}^2$ ($R_i$ is the $i$th 
component of $R$) yields
\begin{align*}
R^T(t) R(t)=\int_0^t d R^T(s) R(s)+\int_0^t R^T(s) dR(s)+\sum_{i=1}^{n-r}[R_i, 
R_i]_t.                                                                    
\end{align*}
Inserting the differential of $R$ and exploiting that the expectation of the 
Ito integrals is zero, gives 
\begin{align*}
\mathbb E\left[R^T(t) R(t)\right]=\mathbb E \left[\int_0^t a^T(s) 
R(s)ds\right]+\mathbb E\left[\int_0^t R^T(s) a(s) ds\right] +\mathbb E
\sum_{i=1}^{n-r}[R_i, R_i]_t,                                                   
                     \end{align*}
where we set $a(s)={A}_{21}{x}_1(s)+{A}_{22}{x}_2(s)$. Setting $R\equiv 0$ gives $0=\mathbb E\sum_{i=1}^{n-r}[R_i, R_i]_t$. Since $R_i$ and $e_i^T B_2 M$ have the same 
martingale parts and the same jumps, their compensator processes coincide by 
(\ref{decomqucov}) and hence  \begin{align*}
    0=\mathbb E\sum_{i=1}^{n-r}[R_i, R_i]_t=\mathbb E \sum_{i=1}^{n-r}[e_i^T B_2 
M, e_i^T B_2 M]_t.
    \end{align*} 
 Applying Ito's product formula to $(B_2 M)^T B_2M$ and taking the expectation, 
we have \begin{align*}
    0=\mathbb E \sum_{i=1}^{n-r}[e_i^T  B_2 M, e_i^T B_2 M]_t=\mathbb E (B_2 
M)^T B_2M
    \end{align*} 
which implies $B_2 M=0$ $\mathbb P$-a.s. Using this simplification in 
(\ref{algbraicconst}) yields \begin{align}\label{a22welldef}
 x_2(t)=-A_{22}^{-1} A_{21} x_1(t),
                             \end{align}
which is well-defined by Lemma \ref{btpresstab} and which we use in the first row of (\ref{balrelpart}) and in (\ref{balrelpartout}). This leads to reduced order coefficients 
\begin{align}\label{coefficientspa}
   (A_r, B_r, C_r)=(A_{11}- A_{12} A_{22}^{-1} A_{21}, B_1, C_1-C_2 A_{22}^{-1} 
A_{21}).    \end{align}
This reduced model is different to the deterministic case, that requires  
$B_r=B_1-A_{12} A_{22}^{-1}B_2$ with an additional term in the output equation which does not depend on the state, see \cite[Section 2]{spa}. In the deterministic case, the ROM is balanced \cite{spa}, which is not true here due to the modification. Like in the deterministic case, the observability Gramian is given by 
$Q_r=\Sigma_1=\diag(\sigma_1, \ldots, \sigma_r)$. This property is obtained by multiplying (\ref{partobeq}) with $\hat A^{-T}$ ($\hat A$ is the matrix of the balanced system) from the left and with $\hat A^{-1}$ from the right and then evaluating the left upper block of the resulting equation, see also 
(\ref{bzwzw}). The following example shows that the reduced order reachability is $P_r$ is not equal to $\Sigma_1$ in general which is different from the deterministic case. 
\begin{example}
Let $M$ be a standard Wiener process, then $\mathcal Q_M=I$ and set
\begin{align*}
A=\left(\begin{smallmatrix}
  -2&-\frac{4}{3}&-\frac{4}{5}\\
  -\frac{4}{3}&-1&-\frac{2}{3}\\
  -\frac{4}{5}&-\frac{2}{3}&-\frac{1}{2}
    \end{smallmatrix}\right)\;\;\;\text{and}\;\;\; 
C^T=B=\left(\begin{smallmatrix}
  4\\
  2\\
  1
    \end{smallmatrix}\right).
    \end{align*}             
$A$ is asymptotically stable and the system is balanced since 
$P=Q=\diag(4, 2, 1)$. We fix the reduced order dimension to $r=2$ and compute the reduced order coefficients by SPA in (\ref{coefficientspa}). We know that $Q_r=\diag(4, 2)$ but the reachability 
Gramian is up to the digits shown $P_r=\left(\begin{smallmatrix}
  10.1604&2.5668&\\
  2.5668&11.8396
    \end{smallmatrix}\right)$ which we computed numerically. This implies that 
the HSVs are not a subset of the original ones anymore. Here, 
they are $6.5822$ and $4.5822$.
   \end{example}
We conclude this Section by a stability result from \cite{spa}. 
\begin{lem}\label{spapresstab}
Let the Gramians $P$ and $Q$ be positive definite and $\sigma_r\neq \sigma_{r+1}$, 
then $\sigma\left(A_{ii}- A_{ij} A_{jj}^{-1} A_{ji}\right)\subset \mathbb C_-$ 
for $i, j=1, 2$ with $i\neq j$. 
\end{lem}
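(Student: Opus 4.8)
The statement is precisely the classical singular perturbation stability lemma of Liu and Anderson, so the plan is to reduce the claim to known facts about the balanced realisation rather than to re-derive everything from scratch. Since the reduced-order flow matrix $A_{ii}-A_{ij}A_{jj}^{-1}A_{ji}$ is exactly the Schur complement of $A_{jj}$ in $\hat A$, and since the roles of $i$ and $j$ are symmetric, it suffices to treat one case, say $i=1$, $j=2$, and conclude $\sigma(A_{11}-A_{12}A_{22}^{-1}A_{21})\subset\mathbb C_-$; the case $i=2$, $j=1$ follows by relabelling the partition. Note that Lemma \ref{btpresstab} already guarantees $A_{22}$ is invertible (indeed asymptotically stable), so the expression is well defined.

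First I would record the Lyapunov equations satisfied by the balanced realisation. Writing $\hat A=T A T^{-1}$ with the block partition in \gref{balancedrels}, and using $\hat P=\hat Q=\Sigma=\diag(\Sigma_1,\Sigma_2)$ with $\Sigma_1=\diag(\sigma_1,\dots,\sigma_r)$ and $\Sigma_2=\diag(\sigma_{r+1},\dots,\sigma_n)$, the transformed reachability and observability Lyapunov equations read
\begin{align*}
\hat A\,\Sigma+\Sigma\,\hat A^T&=-\hat B\mathcal Q_M\hat B^T,\\
\hat A^T\Sigma+\Sigma\,\hat A&=-\hat C^T\hat C.
\end{align*}
Reading off the diagonal blocks of these identities gives, for $i=1,2$,
\begin{align*}
A_{ii}\Sigma_i+\Sigma_i A_{ii}^T&=-\hat B_i\mathcal Q_M\hat B_i^T,\\
A_{ii}^T\Sigma_i+\Sigma_i A_{ii}&=-\hat C_i^T\hat C_i,
\end{align*}
and reading off the $(1,2)$ off-diagonal block of the same two identities expresses $A_{12}\Sigma_2+\Sigma_1 A_{21}^T$ and $A_{21}^T\Sigma_2+\Sigma_1 A_{12}$ in terms of $\hat B_1\mathcal Q_M\hat B_2^T$ and $\hat C_1^T\hat C_2$. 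These are the algebraic relations that the Schur complement must be tested against.

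The heart of the argument is then to produce a Lyapunov certificate for $\tilde A:=A_{11}-A_{12}A_{22}^{-1}A_{21}$. The natural candidate is $\Sigma_1$ itself: one computes $\tilde A\,\Sigma_1+\Sigma_1\tilde A^T$, substitutes the four block relations above to eliminate $A_{11}$, $A_{12}$, $A_{21}$ in favour of the data matrices and $A_{22}$, and simplifies using $A_{22}\Sigma_2+\Sigma_2A_{22}^T=-\hat B_2\mathcal Q_M\hat B_2^T$. The outcome, as in \cite{spa}, is an expression of the form $\tilde A\,\Sigma_1+\Sigma_1\tilde A^T=-W W^T$ for an explicit matrix $W$ (roughly $W=\hat B_1-A_{12}A_{22}^{-1}\hat B_2$ up to the $\mathcal Q_M^{1/2}$ factor), i.e. $\Sigma_1>0$ solves a Lyapunov inequality for $\tilde A$. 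Since $\Sigma_1$ is positive definite (all $\sigma_i>0$ because $P,Q>0$), this forces $\sigma(\tilde A)\subseteq\overline{\mathbb C_-}$ immediately; to upgrade to strict stability $\sigma(\tilde A)\subset\mathbb C_-$ one rules out eigenvalues on the imaginary axis by the standard observability/controllability argument: if $\tilde A v=\mathrm{i}\omega v$ with $v\ne0$, then $v^*WW^Tv=0$, so $W^Tv=0$, and feeding this back into the block relations (together with the dual observability Lyapunov equation for $\Sigma_1$ and $\tilde A$, obtained the same way) and the hypothesis $\sigma_r\ne\sigma_{r+1}$ — which is exactly what makes $\Sigma_1$ and $\Sigma_2$ share no eigenvalue and hence what decouples the blocks — yields $v=0$, a contradiction. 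I expect the main obstacle to be purely computational bookkeeping: carrying out the block-matrix substitution cleanly so that the cross terms cancel and the right-hand side collapses to a single negative semidefinite term, and then handling the imaginary-axis exclusion in the possibly non-minimal case where $\mathcal Q_M$ is only semidefinite, where one must invoke $\sigma_r\ne\sigma_{r+1}$ rather than minimality. Since all of this is established in \cite[Section 2]{spa} for $\mathcal Q_M=I$ and the replacement $B\mapsto B\mathcal Q_M^{1/2}$ changes nothing structural, the cleanest exposition is simply to cite that reference, which is what the statement already does.
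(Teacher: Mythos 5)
The paper gives no proof of this lemma at all---it is stated verbatim as a known result from \cite{spa}, which is exactly the conclusion you reach at the end of your proposal. Your reconstruction of the underlying Liu--Anderson argument is nevertheless sound: the Schur complement $\bar A=A_{11}-A_{12}A_{22}^{-1}A_{21}$ does satisfy $\bar A\Sigma_1+\Sigma_1\bar A^T=-WW^T$ with $W=(B_1-A_{12}A_{22}^{-1}B_2)\mathcal Q_M^{1/2}$ (the block substitution closes exactly as you describe), and the exclusion of imaginary-axis eigenvalues does hinge on $\sigma_r\neq\sigma_{r+1}$, so your sketch is a correct and faithful account of the proof the paper delegates to its reference.
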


\noindent The inverses in Lemma \ref{spapresstab} exist because of the asymptotic 
stability of the matrices $A_{11}$ and $A_{22}$, see Lemma \ref{btpresstab}.

\subsection{Error bounds for BT and SPA}\label{btandspa}

Before we specify the error bounds for BT and SPA, we provide a general error bound comparing the 
outputs of (\ref{stochsysnew}) and (\ref{generalreducedsys}) with 
asymptotically stable matrices $A$, $A_r$ and initial conditions $x_0=0$, 
$x_{r, 0}=0$. These outputs are then given by 
Ornstein-Uhlenbeck processes \begin{align*}
 y(t)&= C x(t)= C \int_0^t \expn^{A(t-s)} B dM(s), \\
y_r(t)&= C_r x_r(t)= C_r \int_0^t \expn^{A_r(t-s)} B_r dM(s)
\end{align*}
as mentioned in \cite{applebaumendlich}, see also \cite{ornsteinrep1, 
ornsteinuhlenbecl2}. Using these representations and Cauchy's inequality as well as Ito's isometry (see \cite{zabczyk}), we obtain 
\begin{align*}
&\mathbb E \left\|y(t)- y_r(t)\right\|_{\mathbb R^p}\leq \left(\mathbb E 
\left\|y(t)- y_r(t)\right\|_{\mathbb R^p}^2\right)^{\frac{1}{2}} 
\\&= \left(\mathbb E 
\left\|\int_0^t \left(C\expn^{A(t-s)} B - C_r \expn^{A_r(t-s)} B_r\right) 
dM(s)\right\|_{\mathbb R^p}^2\right)^{\frac{1}{2}}\\&=\left(\int_0^t 
\left\| \left(C\expn^{A(t-s)} B - C_r \expn^{A_r(t-s)} B_r\right)\mathcal 
Q_M^{\frac{1}{2}}\right\|_{F}^2 ds\right)^{\frac{1}{2}},
\end{align*}
where $\mathcal Q_M$ is the covariance matrix of $M$. Substitution and taking 
 limits yields 
\begin{align*}
 \mathbb E \left\|y(t)- y_r(t)\right\|_{\mathbb R^p}\leq \left(\int_0^\infty 
\left\| \left(C\expn^{As} B - C_r \expn^{A_r s} B_r\right)\mathcal 
Q_M^{\frac{1}{2}}\right\|_{F}^2 ds\right)^{\frac{1}{2}}.
\end{align*}
Using the definition of the Frobenius norm and the linearity of the 
trace, we have \begin{align}\label{computeerrorbound}
 \sup_{t\in[0, T]}\mathbb E \left\|y(t)- y_r(t)\right\|_{\mathbb R^p}\leq 
\left(\trace\left(C P C^T\right)+\trace\left(C_r P_r 
C_r^T\right)-2\trace \left(C P_g C_r^T\right)\right)^{\frac{1}{2}},
\end{align}
where $P=\int_0^\infty\expn^{As}B\mathcal Q_MB^T \expn^{A^Ts}ds$ is the reachability Gramian of the original system satisfying (\ref{lyapeqreach}), $P_r=\int_0^\infty\expn^{A_r s}B_r\mathcal Q_MB_r^T \expn^{A_r^Ts}ds$ the one 
of the reduced system satisfying
\begin{align*}
A_{r} P_r+P_r A_{r}^T =-B_r \mathcal Q_M B_r^T                            
    \end{align*}
and $P_g=\int_0^\infty\expn^{A s}B\mathcal Q_MB_r^T \expn^{A_r^Ts}ds$ is 
the solution to \begin{align}\label{gemischtegram}
A P_g+P_g A_{r}^T =-B \mathcal Q_M B_r^T.                            
    \end{align}
Equation (\ref{gemischtegram}) is a consequence of Proposition 
\ref{prodeinfachh} with $A_1=A$ and $A_2=A_r$ and the fact that $\expn^{A s}B\mathcal Q_MB_r^T \expn^{A_r^Ts}\rightarrow 0$ if $s\rightarrow\infty$ due to the asymptotic 
stability of $A$ and $A_r$. The matrices $P, P_r$ and $P_g$ in the 
bound in (\ref{computeerrorbound}) are all well-defined because $A$ and $A_r$ 
are asymptotically stable. The error bound in (\ref{computeerrorbound}) holds for both BT and SPA since both approaches preserve asymptotic stability, see Lemmas \ref{btpresstab} and \ref{spapresstab}.

For both BT and SPA the 
representation in (\ref{computeerrorbound}) can be used for practical 
computations of the error bound. The Gramian $P$ is already available since it 
is required in the balancing procedure. The reduced model Gramian $P_r$ is 
computationally cheap because it is low dimensional assuming that we fix a 
small ROM dimension. The same is true for $P_g$ since it has only a 
few columns which makes the solution to (\ref{gemischtegram}) easily accessible. Since the error bound (\ref{computeerrorbound}) is computationally cheap, it can be computed for several ROM dimensions and hence be used to 
find a suitable $r$.

In the next two Theorems we specify the general error bound in (\ref{computeerrorbound}) for both BT and SPA and represent it in terms of the truncated HSVs $\sigma_{r+1}, \ldots, \sigma_n$ of the system. Using the balanced realisation (\ref{balancedrels}) of the original system with $\hat 
P=\hat Q=\Sigma$ and its corresponding partition, we have \begin{align} 
\label{partreacheq}
\mat{cc}{A}_{11}&{A}_{12}\\ 
{A}_{21}&{A}_{22}\rix \hspace{-0.1cm}\mat{cc}\Sigma_1\hspace{-0.2cm}{}&\\ 
&\Sigma_2\rix \hspace{-0.05cm}+\hspace{-0.05cm} 
\mat{cc}\Sigma_1\hspace{-0.2cm}{}&\\ 
&\Sigma_2\rix \hspace{-0.1cm} \mat{cc}A^T_{11}&A^T_{21}\\ 
A^T_{12}& A^T_{22}\rix\hspace{-0.05cm}=& \hspace{-0.05cm}
-\mat{cc}\tilde {B}_1\tilde B_1^T&\tilde {B}_1\tilde B_2^T \\ 
\tilde {B}_2\tilde B_1^T& \tilde {B}_2\tilde B_2^T\rix \\ \label{partobeq}
\mat{cc}{A}^T_{11}&{A}^T_{21}\\ 
{A}^T_{12}&{A}^T_{22}\rix \hspace{-0.1cm}\mat{cc}\Sigma_1\hspace{-0.2cm}{}&\\ 
&\Sigma_2\rix \hspace{-0.05cm}+\hspace{-0.05cm} 
\mat{cc}\Sigma_1\hspace{-0.2cm}{}&\\ 
&\Sigma_2\rix \hspace{-0.1cm} \mat{cc}A_{11}&A_{12}\\ 
A_{21}& A_{22}\rix\hspace{-0.05cm}=& \hspace{-0.05cm}
-\mat{cc}C_1^TC_1&C_1^TC_2 \\ 
C^T_2C_1& C^T_2C_2\rix
\end{align}
where $\mat{c}\tilde {B}_1\\ \tilde B_2 
\rix=\mat{c}{B}_1\mathcal{Q}_M^{\frac{1}{2}}\\ B_2 
\mathcal{Q}_M^{\frac{1}{2}}
\rix$, $\Sigma_1=\diag(\sigma_{1}, \ldots, \sigma_r)$ and 
$\Sigma_2=\diag(\sigma_{r+1}, \ldots, \sigma_n)$.

\begin{thm}\label{bterrorbound}
Let $y_{BT}$ be the output of the reduced order system obtained by BT, then under the assumptions of Lemma \ref{btpresstab}, we 
have\begin{align*}
 \sup_{t\in[0, T]}\mathbb E \left\|y(t)- y_{BT}(t)\right\|_{\mathbb 
R^p}\leq \left(\trace(\Sigma_2 ( B_2 \mathcal{Q}_M B_2^T+2 P_{g, 2} 
A_{21}^T))\right)^{\frac{1}{2}},\end{align*}
where $P_{g, 2}$ are the last $n-r$ rows of $T P_g$ with $T$ being the 
balancing transformation. 
\end{thm}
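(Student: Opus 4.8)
The plan is to start from the general bound \gref{computeerrorbound} and evaluate the three traces appearing there in the balanced coordinates of \gref{balancedrels}, where $P=\Sigma=\diag(\Sigma_1,\Sigma_2)$ and $(A_r,B_r,C_r)=(A_{11},B_1,C_1)$, and then to eliminate the reduced‑order Gramians $P_r$ and $P_g$ in favour of the blocks $A_{ij},C_i,\Sigma_i$ of the partitioned Lyapunov equations \gref{partreacheq}--\gref{partobeq} and the tail $P_{g,2}$.

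First I would record three block facts. Since $\trace(CPC^T)$ is invariant under the balancing transformation $T$, $\trace(CPC^T)=\trace(\hat C\Sigma\hat C^T)=\trace(C_1\Sigma_1C_1^T)+\trace(C_2\Sigma_2C_2^T)$. The reduced reachability Gramian satisfies $A_{11}P_r+P_rA_{11}^T=-\tilde B_1\tilde B_1^T$, which is exactly the $(1,1)$‑block of \gref{partreacheq}; since $\sigma(A_{11})\subset\C_-$ by Lemma \ref{btpresstab}, uniqueness of the Lyapunov solution gives $P_r=\Sigma_1$, hence $\trace(C_rP_rC_r^T)=\trace(C_1\Sigma_1C_1^T)$. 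Multiplying \gref{gemischtegram} by $T$ from the left and using $TAT^{-1}=\hat A$, $TB=\hat B$ shows that $TP_g=:\left(\begin{smallmatrix}P_{g,1}\\ P_{g,2}\end{smallmatrix}\right)$, with $P_{g,1}\in\R^{r\times r}$, solves $\hat A\,TP_g+TP_g\,A_{11}^T=-\hat B\mathcal Q_MB_1^T$; reading off the top block (with $\hat B\mathcal Q_MB_1^T=\left(\begin{smallmatrix}\tilde B_1\tilde B_1^T\\ \tilde B_2\tilde B_1^T\end{smallmatrix}\right)$) yields $A_{11}P_{g,1}+A_{12}P_{g,2}+P_{g,1}A_{11}^T=-\tilde B_1\tilde B_1^T$, while $\trace(CP_gC_r^T)=\trace(\hat C\,TP_g\,C_1^T)=\trace(C_1P_{g,1}C_1^T)+\trace(C_2P_{g,2}C_1^T)$.

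Then I would do the trace bookkeeping. Set $E:=\Sigma_1-P_{g,1}$; subtracting the top block just obtained from the $(1,1)$‑block of \gref{partreacheq} gives $A_{11}E+EA_{11}^T=A_{12}P_{g,2}$, and multiplying by $\Sigma_1$, taking traces, and using $C_1^TC_1=-(\Sigma_1A_{11}+A_{11}^T\Sigma_1)$ from the $(1,1)$‑block of \gref{partobeq} together with cyclicity yields $\trace(C_1EC_1^T)=-\trace(\Sigma_1A_{12}P_{g,2})$. Next, the $(2,2)$‑blocks of \gref{partreacheq} and \gref{partobeq}, combined with the fact that $\Sigma_2$ is diagonal (so $\trace(X\Sigma_2^2)=\trace(X^T\Sigma_2^2)$), show that both $\trace(C_2\Sigma_2C_2^T)$ and $\trace(\Sigma_2 B_2\mathcal Q_MB_2^T)=\trace(\Sigma_2\tilde B_2\tilde B_2^T)$ equal $-2\trace(A_{22}\Sigma_2^2)$, hence coincide. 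Finally the off‑diagonal $(1,2)$‑block of \gref{partobeq}, namely $\Sigma_1A_{12}+A_{21}^T\Sigma_2=-C_1^TC_2$, gives by cyclicity $-\trace(\Sigma_1A_{12}P_{g,2})-\trace(C_2P_{g,2}C_1^T)=\trace(\Sigma_2P_{g,2}A_{21}^T)$. Substituting these three identities into
\[\trace(CPC^T)+\trace(C_rP_rC_r^T)-2\trace(CP_gC_r^T)=2\trace(C_1EC_1^T)+\trace(C_2\Sigma_2C_2^T)-2\trace(C_2P_{g,2}C_1^T)\]
collapses the right‑hand side to $\trace(\Sigma_2 B_2\mathcal Q_MB_2^T)+2\trace(\Sigma_2P_{g,2}A_{21}^T)=\trace\bigl(\Sigma_2(B_2\mathcal Q_MB_2^T+2P_{g,2}A_{21}^T)\bigr)$, and taking square roots in \gref{computeerrorbound} gives the claim.

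The main obstacle is not any single computation but keeping the block bookkeeping honest: $P_g$, and hence $P_{g,1}$, is genuinely non‑symmetric, so the trace manipulations may only use cyclicity together with the symmetry of $\Sigma_i$ and of the products of the form $C_i^TC_j$; one must invoke exactly the right block of \gref{partreacheq}/\gref{partobeq} at each stage, and it is precisely the off‑diagonal $(1,2)$‑block of the observability equation that makes the two cross terms cancel into the single $P_{g,2}A_{21}^T$ contribution. Getting $P_r=\Sigma_1$ and the equation for $TP_g$ correct (both resting on Lemma \ref{btpresstab}) is the prerequisite that makes all quantities well defined.
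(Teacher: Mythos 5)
Your proposal is correct and follows essentially the same route as the paper's proof: starting from \gref{computeerrorbound}, using $P_r=\Sigma_1$ from the $(1,1)$-block of \gref{partreacheq}, partitioning $TP_g$ via the transformed version of \gref{gemischtegram}, and cancelling the cross terms with the $(1,1)$- and $(1,2)$-blocks of \gref{partobeq} and the $(2,2)$-blocks of both Lyapunov equations. Your bookkeeping via $E=\Sigma_1-P_{g,1}$ is a marginally tidier reorganisation of the same trace identities, and all steps check out.
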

\begin{proof}
Evaluating the left and right upper block of 
(\ref{partobeq}) yields\begin{align} \label{firstc1c2bt2jk}
 A_{11}^T \Sigma_1+\Sigma_1 A_{11}&=-C_1^T C_1\\
 \label{c1c2eqbt2} A_{21}^T \Sigma_2+\Sigma_1 A_{12} &=-C_1^T 
C_2.
\end{align}
From (\ref{computeerrorbound}) the error bound has the form
\begin{align}
\label{eq:epserror}
\epsilon=\sqrt{\trace(C PC^T)+\trace(C_1 P_r C_1^T)-2\trace(C P_g C_1^T)},
\end{align}
since $C_r=C_1$. Using the balancing transformation $T$ and the partition of 
$C T^{-1}$ in (\ref{balancedrels}), we obtain $\trace(C PC^T)=\trace(C T^{-1} T 
P T^T (C T^{-1})^T)=\trace(C T^{-1} \Sigma (C T^{-1})^T)=\trace(C_1\Sigma_1 
C_1^T)+\trace(C_2 \Sigma_2 C_2^T)$. Now, the left upper block of 
(\ref{partreacheq}) is \begin{align} \label{blaaa}
 A_{11} \Sigma_1+\Sigma_1 A_{11}^T=-B_1 \mathcal{Q}_M B_1^T
 \end{align}
 such that $P_r=\Sigma_1$. Using the 
partitions of $CT^{-1}$ and $T P_g=\mat{c}
P_{g, 1} \\
P_{g, 2}
\rix$, we obtain $\trace(C P_g C_1^T)=\trace(C T^{-1} T P_g 
C_1^T)=\trace(C_1 P_{g, 1} C_1^T)+\trace(C_2 P_{g, 
2} C_1^T)$. Inserting these results into (\ref{eq:epserror}) gives
\begin{align}\label{insertforebbt2fh}
\epsilon^2=2\trace(C_1 \Sigma_1 C_1^T)+\trace(C_2 \Sigma_2 
C_2^T)-2\trace(C_1 P_{g, 1} C_1^T)-2\trace(C_2 P_{g, 2} C_1^T).
          \end{align}
Using $\trace(C_2 P_{g, 2} C_1^T)=\trace(P_{g, 2} C_1^T C_2 )$ and substituting (\ref{c1c2eqbt2}) yields 
\begin{align*}
\trace(C_2 P_{g, 2} C_1^T)=-\trace(P_{g,2} (A_{21}^T \Sigma_2+\Sigma_1 A_{12}))=-\trace(\Sigma_2 P_{g, 2} 
A_{21}^T)-\trace(\Sigma_1 A_{12} P_{g, 2}).
          \end{align*}
Multiplying (\ref{gemischtegram}) with the balancing transformation 
from the left and using the partitions of $TAT^{-1}$ and $TB$ from 
(\ref{balancedrels}) yields 
\begin{align*}
\mat{cc}{A}_{11}&{A}_{12}\\ 
{A}_{21}&{A}_{22}\rix \mat{c}
P_{g, 1} \\
P_{g, 2}
\rix+\mat{c}
P_{g, 1} \\
P_{g, 2}
\rix A_{11}^T =-\mat{c}{B}_1\\ {B}_2\rix \mathcal Q_M B_1^T.                    
    \end{align*} 
With the first row of this equation, $A_{11}P_{g, 1}+P_{g, 1} A_{11}^T+B_1\mathcal Q_M B_1^T=-A_{12} P_{g, 2},$
we have\begin{align*}
-\trace(C_2 P_{g, 2} C_1^T)=-\trace(\Sigma_1 (B_1\mathcal{Q}_M B_1^T+A_{11} 
P_{g, 1}+P_{g, 1} A_{11}^T))+\trace(\Sigma_2 P_{g, 2} A_{21}^T),
          \end{align*}
and substituting (\ref{firstc1c2bt2jk}), we obtain  
\begin{align*}
\trace(\Sigma_1 (A_{11}P_{g, 1}+P_{g, 1} A_{11}^T))=\trace(P_{g, 
1}(\Sigma_1 A_{11}+A_{11}^T \Sigma_1))=-\trace(P_{g, 1} C_1^T C_1),
\end{align*}
so that $-\trace(C_2 P_{g, 2} C_1^T)=\trace(\Sigma_2 P_{g, 2} A_{21}^T)-\trace( 
 \Sigma_1 B_1\mathcal Q_M B_1^T )+\trace(C_1 P_{g, 1} C_1^T).$
Inserting this result into (\ref{insertforebbt2fh}) 
gives\begin{align*}
 \epsilon^2=\trace(\Sigma_2 (C_2^T C_2+2 P_{g, 2} A_{21}^T 
))+2\trace(\Sigma_1 C_1^TC_1)-2\trace(\Sigma_1 B_1\mathcal{Q}_M B_1^T ).
          \end{align*}
With (\ref{firstc1c2bt2jk}) and (\ref{blaaa}), and the properties of the trace function we obtain  \begin{align*}
-\trace(\Sigma_1 B_1\mathcal{Q}_M B_1^T)=\trace(\Sigma_1 (A_{11} 
\Sigma_1+\Sigma_1 A_{11}^T))=
-\trace(\Sigma_1 C_1^T C_1).
          \end{align*}
Similarly $\trace(\Sigma_2 C_2^T C_2))=\trace(\Sigma_2 B_2 \mathcal{Q}_M 
B_2^T))$ can be shown using the right lower blocks of (\ref{partreacheq}) and 
(\ref{partobeq}). Hence,
\begin{align*}
 \epsilon^2=\trace(\Sigma_2 (B_2 \mathcal{Q}_M B_2^T+2 P_{g, 2} A_{21}^T )),
          \end{align*}
which gives the result.
\end{proof}

\begin{thm}\label{spaerrorbound}
Let $y_{SPA}$ be the output of the reduced order system obtained by SPA, then under the assumptions of Lemma 
\ref{spapresstab}, we 
have\begin{align*}
 \sup_{t\in[0, T]}\mathbb E \left\|y(t)- y_{SPA}(t)\right\|_{\mathbb 
R^p}\leq \left(\trace(\Sigma_2 (B_2 \mathcal{Q}_M B_2^T-2(A_{22}P_{g, 2}+A_{21} 
P_{g, 1})(A_{22}^{-1} A_{21})^T))\right)^{\frac{1}{2}},\end{align*}
where $P_{g, 1}$ are the first $r$ and $P_{g, 2}$ the last $n-r$ rows of $T 
P_g$ 
with $T$ being the balancing transformation. 
\end{thm}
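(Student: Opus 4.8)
The plan is to mimic the structure of the proof of Theorem~\ref{bterrorbound}, starting from the general computable bound \gref{computeerrorbound}, which applies verbatim here since SPA also preserves asymptotic stability by Lemma~\ref{spapresstab}. For SPA the reduced coefficients are $(A_r,B_r,C_r)=(A_{11}-A_{12}A_{22}^{-1}A_{21},\,B_1,\,C_1-C_2A_{22}^{-1}A_{21})$ from \gref{coefficientspa}, so the error bound \gref{computeerrorbound} becomes
\begin{align*}
\epsilon^2=\trace(CPC^T)+\trace(C_rP_rC_r^T)-2\trace(CP_gC_r^T),
\end{align*}
where $P_r$ solves $A_rP_r+P_rA_r^T=-B_1\mathcal Q_M B_1^T$ and $P_g$ solves \gref{gemischtegram} with $B_r=B_1$. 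First I would record, exactly as in the BT proof, that $\trace(CPC^T)=\trace(C_1\Sigma_1C_1^T)+\trace(C_2\Sigma_2C_2^T)$ using $\hat P=\Sigma$, and that with $TP_g=(P_{g,1}^T\ P_{g,2}^T)^T$ and the partition of $CT^{-1}$ we have $\trace(CP_gC_r^T)=\trace(C_1P_{g,1}C_r^T)+\trace(C_2P_{g,2}C_r^T)$. I would also use that $Q_r=\Sigma_1$ for SPA (stated in the text after \gref{coefficientspa}), which is the analogue of the relation $P_r=\Sigma_1$ exploited in the BT proof; this gives a Lyapunov-type identity $A_r^T\Sigma_1+\Sigma_1 A_r=-C_r^TC_r$ that will let me trade $C_r^TC_r$ for expressions in $A_r$ and $\Sigma_1$.

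The core of the argument is the same bookkeeping trick: expand $C_r=C_1-C_2A_{22}^{-1}A_{21}$ everywhere, substitute the block Lyapunov identities coming from \gref{partreacheq} and \gref{partobeq} (all four blocks of the observability equation and the lower-right and other blocks of the reachability equation), and repeatedly use cyclicity of the trace and the partitioned form of \gref{gemischtegram} multiplied by the balancing transformation $T$, namely
\begin{align*}
\mat{cc}{A}_{11}&{A}_{12}\\ {A}_{21}&{A}_{22}\rix\mat{c}P_{g,1}\\ P_{g,2}\rix+\mat{c}P_{g,1}\\ P_{g,2}\rix A_r^T=-\mat{c}{B}_1\\ {B}_2\rix\mathcal Q_M B_1^T,
\end{align*}
whose two block rows give $A_{11}P_{g,1}+A_{12}P_{g,2}+P_{g,1}A_r^T=-B_1\mathcal Q_MB_1^T$ and $A_{21}P_{g,1}+A_{22}P_{g,2}+P_{g,2}A_r^T=-B_2\mathcal Q_MB_1^T$. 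The plan is to use the second block row to eliminate the $P_g$-dependence into the combination $A_{22}P_{g,2}+A_{21}P_{g,1}$ that appears in the claimed bound, and to use the observability identities (the $C_1^TC_2$ and $C_2^TC_2$ blocks) together with $A_r^T\Sigma_1+\Sigma_1A_r=-C_r^TC_r$ to cancel all the $\Sigma_1$-terms, exactly as the BT proof cancels $\trace(\Sigma_1C_1^TC_1)$ against $\trace(\Sigma_1B_1\mathcal Q_MB_1^T)$. At the end one again uses the lower-right blocks of \gref{partreacheq} and \gref{partobeq} to replace $\trace(\Sigma_2C_2^TC_2)$ by $\trace(\Sigma_2B_2\mathcal Q_MB_2^T)$, leaving the stated right-hand side.

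The main obstacle, compared with the BT case, is that $C_r\neq C_1$ and $A_r\neq A_{11}$, so many more cross terms appear when everything is expanded, and one must be careful that the Lyapunov identity for $\Sigma_1$ now involves $A_r$ rather than $A_{11}$ — in particular $P_r$ is \emph{not} $\Sigma_1$ (as the preceding Example shows), only $Q_r=\Sigma_1$ is available, so the term $\trace(C_rP_rC_r^T)$ cannot be simplified via $P_r=\Sigma_1$ and must instead be folded into the $P_g$ and $\Sigma_2$ terms through the mixed equation \gref{gemischtegram}. The delicate point is therefore the algebraic cancellation showing that all contributions not of the form $\trace(\Sigma_2(\cdots))$ vanish; I expect this to follow by the same pattern of substitutions as in Theorem~\ref{bterrorbound} but to require one extra application of the block equations to handle the $A_{22}^{-1}A_{21}$ factors, with the invertibility of $A_{22}$ guaranteed by Lemma~\ref{btpresstab}. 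I would organise the computation so that the quantity $A_{22}P_{g,2}+A_{21}P_{g,1}$ is isolated as early as possible, since the target bound is written precisely in terms of it, and then verify the remaining terms collapse.
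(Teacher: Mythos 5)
Your plan matches the paper's proof: both start from \gref{computeerrorbound} with the SPA coefficients, use $Q_r=\Sigma_1$ in the form $\bar A^{T}\Sigma_1+\Sigma_1\bar A=-\bar C^{T}\bar C$ together with the block identities extracted from \gref{partobeq} (the paper obtains these by left-multiplying with $\hat A^{-T}$, which is equivalent to your direct expansion of $\bar C=C_1-C_2A_{22}^{-1}A_{21}$ against the four blocks) and the $T$-partitioned form of \gref{gemischtegram}, and the cancellations of the $\Sigma_1$-terms go through exactly as you predict. The one adjustment to your plan: $\trace(\bar C P_r\bar C^{T})$ is not folded into the $P_g$ terms via the mixed equation — it is dispatched on its own by pairing the ROM reachability equation $\bar A P_r+P_r\bar A^{T}=-B_1\mathcal Q_M B_1^{T}$ with $\bar A^{T}\Sigma_1+\Sigma_1\bar A=-\bar C^{T}\bar C$ under the trace, giving $\trace(\bar C P_r\bar C^{T})=\trace(\Sigma_1 B_1\mathcal Q_M B_1^{T})$, after which the remaining bookkeeping proceeds exactly as in Theorem \ref{bterrorbound}.
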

\begin{proof}
Let $TAT^{-1}=\hat A=\mat{cc}{A}_{11}&{A}_{12}\\ 
{A}_{21}&{A}_{22}\rix$, then, since $A_{11}, A_{22}$ are invertible by Lemma 
\ref{btpresstab}, its inverse is given in block form \begin{align}
\hat A^{-1} \mat{cc}
\bar A^{-1}& -A_{11}^{-1} A_{12}(A_{22}-A_{21}A_{11}^{-1}A_{12})^{-1}\\
-A_{22}^{-1}A_{21}\bar A^{-1}& (A_{22}-A_{21}A_{11}^{-1}A_{12})^{-1}
\rix,
\end{align}
where $\bar A=A_{11}-A_{12}A_{22}^{-1}A_{21}$. If we multiply (\ref{partobeq}) 
with $\hat A^{-T}$ from the left hand side and select the left and right upper 
block of this equation, we obtain
\begin{align*}
 \Sigma_1+\bar A^{-T}(\Sigma_1 A_{11}-A_{21}^T A_{22}^{-T}\Sigma_2 
A_{21})&=-\bar A^{-T}\bar C^T C_1,\\
\bar A^{-T}(\Sigma_1 A_{12}-A_{21}^T A_{22}^{-T}\Sigma_2 A_{22})&=-\bar 
A^{-T}\bar C^T C_2,
\end{align*}
where $\bar C=C_{1}-C_{2}A_{22}^{-1}A_{21}$ and thus
\begin{align}\label{barcceins}
 \bar A^{T}\Sigma_1+\Sigma_1 A_{11}-A_{21}^T A_{22}^{-T}\Sigma_2 A_{21}&=-\bar 
C^T C_1,\\ \label{barcczwei} \Sigma_1 A_{12}-A_{21}^T A_{22}^{-T}\Sigma_2 
A_{22}&=-\bar C^T C_2.
\end{align}
Furthermore, multiplying (\ref{partobeq}) 
with $\hat A^{-T}$ from the left and with $\hat A^{-1}$ from the 
right, the resulting left upper block of the equation is 
\begin{align*}
\bar A^{-T} \Sigma_1+\Sigma_1 \bar A^{-1}=-\bar A^{-T}\bar C^T \bar C \bar 
A^{-1}
\end{align*}
and thus\begin{align}\label{bzwzw}
\bar A^T \Sigma_1+\Sigma_1 \bar A=-\bar C^T \bar C.
\end{align}
We define $\epsilon:=\left(\trace\left(C P C^T\right)+\trace\left(\bar C P_r 
\bar C^T\right)-2\;\trace\left(C P_g \bar C^T\right)\right)^{\frac{1}{2}}$ 
which is the error bound for SPA. From the 
proof of Theorem \ref{bterrorbound} we know that the following 
holds\begin{align*}
\trace\left(C P C^T\right)=\trace\left(C_1 \Sigma_1 C_1^T\right)+\trace\left(C_2 
\Sigma_2 C_2^T\right)=\trace\left( \Sigma_1 B_1\mathcal Q_M 
B_1^T\right)+\trace\left( \Sigma_2 B_2\mathcal Q_M B_2^T\right).
                                            \end{align*}
By (\ref{bzwzw}) and the definition of the reachability equation of 
the ROM, we have \begin{align*}
\trace\left(\bar C P_r \bar C^T\right)&=\trace\left(P_r \bar 
C^T\bar C \right)=-\trace\left(P_r (\bar A^T 
\Sigma_1+\Sigma_1 \bar A)\right)= -\trace\left(\Sigma_1 (P_r\bar A^T 
+\bar A P_r)\right)\\&=\trace\left(\Sigma_1 B_1\mathcal{Q}_MB_1^T\right).
\end{align*}
This leads to
\begin{align*}\epsilon^2=2\trace\left( \Sigma_1 B_1\mathcal Q_M 
B_1^T\right)+\trace\left( \Sigma_2 B_2\mathcal Q_M B_2^T\right)-2\;\trace\left(C 
P_g \bar C^T\right).
 \end{align*}
We multiply (\ref{gemischtegram}) with the balancing transformation 
$T$ from the left (here $A_r=\bar A$) and use the partitions of $TAT^{-1}$,  
$TB$ from (\ref{balancedrels}) and the partition of $T P_g=\mat{c}
P_{g, 1} \\
P_{g, 2}
\rix$. Thus,
\begin{align}\label{partmixedeqspa}
\mat{cc}{A}_{11}&{A}_{12}\\ 
{A}_{21}&{A}_{22}\rix \mat{c}
P_{g, 1} \\
P_{g, 2}
\rix+\mat{c}
P_{g, 1} \\
P_{g, 2}
\rix \bar A^T =-\mat{c}{B}_1\\ {B}_2\rix \mathcal Q_M B_1^T.                    
    \end{align}
We obtain $\trace\left(C P_g \bar C^T\right)=\trace\left(CT^{-1}T P_g \bar 
C^T\right)=\trace\left(C_1 P_{g, 1} \bar C^T\right)+\trace\left(C_2 P_{g, 2} 
\bar C^T\right)$ using the partition of $CT^{-1}$ in (\ref{balancedrels}). With  (\ref{barcczwei}) we obtain \begin{align*}
\trace(C_2 P_{g, 2} \bar C^T)&=-\trace(P_{g, 2}(\Sigma_1 A_{12}-A_{21}^T 
A_{22}^{-T}\Sigma_2 A_{22}))\\&=-\trace(\Sigma_1 A_{12}P_{g, 
2})+\trace(\Sigma_2 A_{22}P_{g, 2} A_{21}^T A_{22}^{-T})).
\end{align*}
Inserting the upper block of (\ref{partmixedeqspa}) leads to 
\begin{align*}
\trace(C_2 P_{g, 2} \bar C^T)=\trace(\Sigma_2 
A_{22}P_{g, 2} A_{21}^T A_{22}^{-T})+\trace(\Sigma_1(B_1\mathcal Q_M 
B_1^T+P_{g, 1} \bar A^T+ A_{11}P_{g,1})).
\end{align*}
Using (\ref{barcceins}) and the properties of the trace function we have 
\begin{align*}
\trace(\Sigma_1(P_{g, 1} \bar A^T+ A_{11}P_{g, 1}))
=-\trace(P_{g, 1}\bar C^T C_1-P_{g,1}(A_{22}^{-1}A_{21})^T \Sigma_2 A_{21}).
\end{align*}
Consequently, \begin{align*}
\trace\left(C P_g \bar C^T\right)=\trace(\Sigma_2 
A_{22}P_{g, 2} A_{21}^T A_{22}^{-T})+\trace(\Sigma_1B_1\mathcal Q_M 
B_1^T)-\trace(\Sigma_2 A_{21} P_{g, 1}(A_{22}^{-1}A_{21})^T)
\end{align*}
holds and hence \begin{align*}
\epsilon^2=\trace\left( \Sigma_2 B_2\mathcal Q_M 
B_2^T\right)-2\;(\trace(\Sigma_2 A_{22}P_{g, 2} 
(A_{22}^{-1}A_{21})^T)-\trace(\Sigma_2 A_{21} P_{g,1}(A_{22}^{-1}A_{21})^T)),
          \end{align*}
which provides the required result.          
\end{proof}

The error bound representations in Theorems \ref{bterrorbound} and 
\ref{spaerrorbound} depend on the $n-r$ smallest HSVs
$\sigma_{r+1}, \ldots, \sigma_n$. If the 
corresponding truncated components are unimportant, i.e. they are difficult to 
reach and observe, then the values 
$\sigma_{r+1}, \ldots, \sigma_n$ are small and consequently the 
error bound is small. Hence, the ROM is of good 
quality.

The error bounds in Theorems \ref{bterrorbound} and \ref{spaerrorbound} can be used to find 
a suitable reduced order dimension $r$. Small HSVs $\sigma_{r+1}, \ldots, \sigma_n$ for fixed $r$ would guarantee a small error.

Note that, if $\mathcal Q_M=I$, as for example in the standard Wiener case, then the 
error bound in Theorem \ref{bterrorbound} coincides with the 
$\mathcal{H}_2$-error bound in the deterministic case when using a normalised control, see \cite[Lemma 7.13]{antoulas}. 


The next section provides a particular SDE to which we will apply the theory developed in this section.

\section{Wave equations controlled by L\'evy noise}
\label{sec:wave}

In this section, we deal with a setting that covers the SPDE with its output in (\ref{inrobspeq})-(\ref{introbspoutnew}), a damped wave equation with additive noise which
can formally be interpreted as 
\begin{align}\label{nichtlinsde}
\ddot {\mathcal Z}(t)+\alpha \dot {\mathcal Z}(t)+\tilde {\mathcal A} {\mathcal 
Z}(t)+\tilde {\mathcal B} u(t)+\tilde {\mathcal D}_1 {\mathcal Z}(t-) \dot 
{\tilde M}_1(t)+\tilde {\mathcal D}_2 \dot {\mathcal Z}(t-)\dot {\tilde 
M}_2(t)=0, \end{align}
with $\tilde {\mathcal D}_i=0$ ($i=1, 2$), $\alpha>0$ and the $k$th 
component of the control $u_k\equiv \dot M_k$, $k\in \left\{1, \ldots, 
m\right\}$. Here, $M_1, \ldots, M_m$ are the components of an $\mathbb 
R^m$-valued L\'evy processes $M$ that is square integrable and has mean zero. 

This is in contrast to the setting in \cite{redbensec} where (\ref{nichtlinsde}) with multiplicative L\'evy noise was considered,
e.g. $\mathcal{D}_i\neq 0$ linear bounded operators and $u$ an $m$-dimensional stochastic control, $\tilde M_1$ and $\tilde M_2$ uncorrelated scalar L\'evy 
processes. For the stability analysis of the uncontrolled equation (\ref{nichtlinsde}) with 
Wiener noise ($u\equiv 0$) we refer to \cite{secord}.

Since L\'evy noise is no feasible control in the framework in \cite{redbensec}, this setting 
requires further analysis. We transform  
damped wave equation with additive noise into a first order SPDE and define the 
corresponding solution in Section \ref{setting}, following the approach in \cite{secord,redbensec}. In Section \ref{subsecspeltralgal}, we explain how the resulting first order SPDE can be approximated by a spectral Galerkin scheme. We refer to \cite{galerkin, galerkinhaus, galerkinjentzen, redbensec}, where similar techniques were applied.

\subsection{Setting and transformation into a first order SPDE}\label{setting}
Let $M=(M_1, \ldots, M_m)^T$ be square integrable L\'evy 
processes with zero mean that takes values in $\mathbb R^m$. Moreover, $M$ is 
defined on a complete probability space $\left(\Omega, \mathcal F, (\mathcal 
F_t)_{t\geq 0}, \mathbb P\right)$,\footnote{We assume 
that $\left(\mathcal F_t\right)_{t\geq 0}$ is right-continuous and that 
$\mathcal F_0$ contains all $\mathbb P$ null sets.} it is adapted to the 
filtration $(\mathcal F_t)_{t\geq 0}$ and its increments $M(t+h)-M(t)$ 
are independent of $\mathcal F_t$ for $t, h\geq 0$.

Let $\tilde {\mathcal 
A}:D(\tilde {\mathcal A})\rightarrow \tilde H$ be a 
self adjoint and positive definite operator on a separable Hilbert space $\tilde H$ and let $\{\tilde h_k\}_{k\in\mathbb N}$ be an orthonormal basis of eigenvectors of $\tilde {\mathcal A}$ for $\tilde H$, 
\begin{align}\label{eigvaleq}
\tilde {\mathcal A} \tilde h_k=\tilde \lambda_k \tilde h_k,
\end{align}
where $0<\tilde \lambda_1\leq \tilde \lambda_2\leq\ldots$ are the corresponding 
eigenvalues. We denote the well-defined square root of $\tilde {\mathcal A}$ 
by $\tilde {\mathcal A}^{\frac{1}{2}}$. $D(\tilde {\mathcal A}^{\frac{1}{2}})$ 
equipped with the inner product $\left\langle x, y \right\rangle_{D(\tilde 
{\mathcal A}^{\frac{1}{2}})}=\left\langle \tilde {\mathcal A}^{\frac{1}{2}} x, 
\tilde {\mathcal A}^{\frac{1}{2}} y\right\rangle_{\tilde H}$ represents a 
separable Hilbert space as well.

The (symbolic) second order SPDE we consider is given by 
\begin{align}\label{nichtlinsde2}
\ddot {\mathcal Z}(t)+\alpha \dot {\mathcal Z}(t)+\tilde {\mathcal A} {\mathcal 
Z}(t)+\sum_{k=1}^m\tilde {\mathcal B}_k \dot M_k(t)=0 
\end{align} 
with initial conditions 
${\mathcal Z}(0)=z_0$, $\dot {\mathcal Z}(0)=z_1$, $\alpha>0$ and output
\begin{align}\label{eq:output}
 \mathcal Y(t)= \mathcal C\left( \begin{smallmatrix}
  {\mathcal Z}(t) \\
  \dot {\mathcal Z}(t) 
 \end{smallmatrix}\right),\;\;\;t\geq 0.
\end{align}
We assume $\tilde {\mathcal B}_k\in\tilde H$
 and $\mathcal C\in L(D(\tilde {\mathcal A}^{\frac{1}{2}})\times \tilde H, 
\mathbb R^p)$. Using the separable Hilbert space $H=D(\tilde {\mathcal 
A}^{\frac{1}{2}})\times 
\tilde H$ with the inner product \begin{align*}
 \left\langle \left(\begin{smallmatrix}
  \tilde Z_1 \\
  \tilde Z_2 
 \end{smallmatrix}\right), \left( \begin{smallmatrix}
  \bar Z_1 \\
  \bar Z_2 
 \end{smallmatrix}\right)\right\rangle_{H}=\left\langle  
\tilde {\mathcal A}^{\frac{1}{2}} \tilde Z_1, \tilde {\mathcal A}^{\frac{1}{2}} 
\bar Z_1 
\right\rangle_{\tilde H}+\left\langle \tilde Z_2, \bar Z_2 
\right\rangle_{\tilde H}, \end{align*}
we transform this second order system into 
a first order system following the approach in \cite{secord, redbensec}. The 
system (\ref{nichtlinsde2})-(\ref{eq:output}) can be expressed as: 
\begin{align}\label{firstordertransf}
d\mathcal X(t)&=\mathcal A\mathcal X(t)dt+\sum_{k=1}^m\mathcal B_k 
dM_k(t),\;\;\;\mathcal 
X(0)=\mathcal{X}_0=\left( \begin{matrix}
  z_0 \\
  z_1 
 \end{matrix}\right),\\ 
\mathcal Y(t)&= \mathcal C\mathcal X(t),\;\;\;t\geq 0,\label{firstordertransfout}\end{align}
where \begin{align*}
\mathcal X(t)=\left( \begin{matrix}
  \mathcal Z(t) \\
  \dot {\mathcal Z}(t) 
 \end{matrix}\right)\in H,\;\;\;\mathcal A=\left[\begin{matrix}
                                   0 & I\\
                                   -\tilde {\mathcal A} & -\alpha I             
    \end{matrix}\right]\;\;\;\text{and}\;\;\;\mathcal B_k=\left[\begin{matrix}
                                   0 \\
                                   -\tilde {\mathcal B}_k                       
       \end{matrix}\right]\in H.\end{align*}
So far, we only worked with symbolic equation, since the classical 
derivative of a L\'evy process does not exist in general. The next lemma from 
\cite{pritzab} provides a stability result and is vital to define a c\`adl\`ag 
mild solution of (\ref{firstordertransf}).
\begin{lem}\label{lemwaveconexsemigr}
 For every $\alpha>0$ the linear operator $\mathcal A$ with domain $D(\tilde 
{\mathcal A})\times 
D(\tilde {\mathcal A}^{\frac{1}{2}})$ generates an exponential stable 
contraction 
semigroup $\left(S(t)\right)_{t\geq 0}$ with \begin{align*}
\left\| S(t)\right\|_{L(H)}\leq \expn^{-c t},\quad\text{where}\quad
       c\geq \frac{2 \alpha \tilde\lambda_1}{4 
\tilde\lambda_1+\alpha(\alpha+\sqrt{\alpha^2+4\tilde\lambda_1})}.
      \end{align*}
\end{lem}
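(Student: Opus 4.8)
The plan is to establish the contraction semigroup property via the Lumer--Phillips theorem and then quantify the exponential decay rate by a careful energy (Lyapunov) estimate. First I would verify that $\mathcal{A}$ with domain $D(\tilde{\mathcal{A}})\times D(\tilde{\mathcal{A}}^{1/2})$ is densely defined and dissipative on $H$: for $\mathcal{X}=(Z_1,Z_2)^T\in D(\mathcal{A})$ one computes
\begin{align*}
\langle \mathcal{A}\mathcal{X},\mathcal{X}\rangle_H
=\langle \tilde{\mathcal{A}}^{1/2}Z_2,\tilde{\mathcal{A}}^{1/2}Z_1\rangle_{\tilde H}
+\langle -\tilde{\mathcal{A}}Z_1-\alpha Z_2,Z_2\rangle_{\tilde H}
=\langle \tilde{\mathcal{A}}^{1/2}Z_2,\tilde{\mathcal{A}}^{1/2}Z_1\rangle_{\tilde H}-\langle \tilde{\mathcal{A}}^{1/2}Z_1,\tilde{\mathcal{A}}^{1/2}Z_2\rangle_{\tilde H}-\alpha\|Z_2\|_{\tilde H}^2,
\end{align*}
so that $\operatorname{Re}\langle \mathcal{A}\mathcal{X},\mathcal{X}\rangle_H=-\alpha\|Z_2\|_{\tilde H}^2\le 0$. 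Next I would check the range condition, i.e. that $\lambda I-\mathcal{A}$ is surjective for some $\lambda>0$; given $(F_1,F_2)^T\in H$ one solves $\lambda Z_1-Z_2=F_1$ and $\tilde{\mathcal{A}}Z_1+(\lambda+\alpha)Z_2=F_2$, which reduces to $(\lambda(\lambda+\alpha)I+\tilde{\mathcal{A}})Z_1=F_2+(\lambda+\alpha)F_1$; since $\tilde{\mathcal{A}}$ is positive definite self-adjoint, $\lambda(\lambda+\alpha)I+\tilde{\mathcal{A}}$ is boundedly invertible, giving a solution in $D(\mathcal{A})$. By Lumer--Phillips, $\mathcal{A}$ generates a contraction $C_0$-semigroup $(S(t))_{t\ge0}$.

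The harder part is the quantitative decay rate. The natural approach is to diagonalise: since $\{\tilde h_k\}$ is an orthonormal eigenbasis of $\tilde{\mathcal{A}}$, the semigroup decouples into $2\times2$ blocks acting on the span of $(\tilde h_k,0)$ and $(0,\tilde h_k)$, governed by the matrices $\mathcal{A}_k=\left(\begin{smallmatrix}0&1\\-\tilde\lambda_k&-\alpha\end{smallmatrix}\right)$ with eigenvalues $\mu_k^{\pm}=\tfrac{1}{2}(-\alpha\pm\sqrt{\alpha^2-4\tilde\lambda_k})$. One then wants a uniform-in-$k$ bound $\|\expn^{\mathcal{A}_k t}\|\le \expn^{-ct}$ in the $H$-block norm (which weights the first component by $\tilde\lambda_k^{1/2}$). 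I would do this by exhibiting, for each block, a Lyapunov functional: on the state $(z,\dot z)$ for the scalar equation $\ddot z+\alpha\dot z+\tilde\lambda_k z=0$, consider $V_k=\tfrac12(\tilde\lambda_k|z|^2+|\dot z|^2)+\varepsilon\,\tilde\lambda_k^{1/2}\operatorname{Re}(z\overline{\dot z})$ for a suitably chosen $\varepsilon>0$; then $\dot V_k\le -2c\,V_k$ follows from a completion-of-squares argument, and optimising $\varepsilon$ over $k$ yields the stated bound on $c$. The algebraic optimisation is exactly where the somewhat unusual constant $c\ge \frac{2\alpha\tilde\lambda_1}{4\tilde\lambda_1+\alpha(\alpha+\sqrt{\alpha^2+4\tilde\lambda_1})}$ comes from, with the worst case occurring at the lowest eigenvalue $\tilde\lambda_1$.

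The main obstacle I anticipate is making the block-diagonal estimate genuinely uniform in $k$ while keeping track of the $\tilde\lambda_k$-weighted inner product of $H$: in the overdamped regime $4\tilde\lambda_k<\alpha^2$ the eigenvalues are real and the slowest mode $\mu_k^+=\tfrac12(-\alpha+\sqrt{\alpha^2-4\tilde\lambda_k})$ is closest to $0$ for small $\tilde\lambda_k$, whereas in the underdamped regime the decay rate is $\alpha/2$; reconciling these, and the non-normality of the blocks (which forces the $\varepsilon$-correction term in $V_k$ and is responsible for a constant rather than the naive $\expn^{\mu_k^+t}$), is the delicate point. Since the result is quoted from \cite{pritzab}, in the write-up I would either cite it directly or sketch the Lyapunov computation above, noting that the contraction property is immediate from dissipativity and that the explicit rate follows from the eigenvalue analysis together with the $\varepsilon$-optimisation; I would not grind through the full optimisation but would indicate that equality-type analysis at $\tilde\lambda_1$ produces the stated bound.
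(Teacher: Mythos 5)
The paper offers no proof of this lemma at all---it is quoted directly from \cite{pritzab}---so there is no internal argument to compare against, and your stated fallback of citing that reference for the explicit rate is exactly what the authors do. The parts you do carry out are correct: the dissipativity computation $\operatorname{Re}\langle\mathcal A\mathcal X,\mathcal X\rangle_H=-\alpha\|Z_2\|_{\tilde H}^2$ is right precisely because the first component of $H$ carries the $\tilde{\mathcal A}^{1/2}$-weighted inner product, so the two cross terms cancel; the range condition does reduce to the bounded invertibility of $\lambda(\lambda+\alpha)I+\tilde{\mathcal A}$, with $Z_1\in D(\tilde{\mathcal A})$ and $Z_2=\lambda Z_1-F_1\in D(\tilde{\mathcal A}^{1/2})$; so Lumer--Phillips delivers the contraction semigroup. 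Your structural point that the non-normality of the $2\times 2$ blocks forces a Lyapunov functional with a cross term, rather than a naive bound by the slowest block eigenvalue, is also the right one.

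The only genuine gap is that the step producing the specific constant is asserted rather than performed. If you want to close it, note first that the constant simplifies: with $s=\sqrt{\alpha^2+4\tilde\lambda_1}$ one has $4\tilde\lambda_1+\alpha(\alpha+s)=s(s+\alpha)$ and $2\alpha\tilde\lambda_1=\tfrac{\alpha}{2}(s-\alpha)(s+\alpha)$, so
\begin{align*}
\frac{2\alpha\tilde\lambda_1}{4\tilde\lambda_1+\alpha\bigl(\alpha+\sqrt{\alpha^2+4\tilde\lambda_1}\bigr)}
=\frac{\alpha(s-\alpha)}{2s}
=\frac{\alpha}{2}\left(1-\frac{\alpha}{\sqrt{\alpha^2+4\tilde\lambda_1}}\right).
\end{align*}
Since $\lambda\mapsto\frac{\alpha}{2}\bigl(1-\alpha/\sqrt{\alpha^2+4\lambda}\bigr)$ is increasing, this confirms your claim that the binding block is the one with the smallest eigenvalue $\tilde\lambda_1$, and it gives a clean target $\dot V_k\le-2c\,V_k$ for the per-block estimate with your ansatz $V_k=\tfrac12(\tilde\lambda_k|z|^2+|\dot z|^2)+\varepsilon\,\tilde\lambda_k^{1/2}\operatorname{Re}(z\overline{\dot z})$; the completion-of-squares and the choice of $\varepsilon$ still need to be written out to verify that $V_k$ stays equivalent to the $H$-block norm with constant $1$ (otherwise you lose the contraction property $\|S(t)\|\le\expn^{-ct}$ with prefactor exactly $1$). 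As submitted, your proof is no less complete than the paper's, provided the citation to \cite{pritzab} is retained for the quantitative rate.
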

We use this result to define the solution to (\ref{firstordertransf}).
\begin{defn}
An $(\mathcal F_t)_{t\geq 0}$-adapted c\`adl\`ag process $\left(\mathcal 
X(t)\right)_{t\geq 0}$ is called mild solution to (\ref{firstordertransf}) 
if for all $t\geq 0$ \begin{align}\label{mildsolalternative}
\mathcal X(t)=S(t) \mathcal{X}_0+\sum_{k=1}^m\int_0^t S(t-s) \mathcal B_k dM_k(s).
\end{align}
\end{defn}
We refer to \cite{zabczyk} for the definition of the Ito integral in 
(\ref{mildsolalternative}). For the finite dimensional case, the definition of 
an Ito integral with respect to L\'evy processes can be found in 
\cite{applebaumendlich} and Ito integrals with respect to martingales are 
defined in \cite{kuo}.

If we set $\tilde H=L^2([0, \pi])$, $D(\tilde {\mathcal 
A}^{\frac{1}{2}})=H_0^1([0, \pi])$, $\tilde {\mathcal A}=-\frac{\partial^2}{\partial \zeta^2}$, $\tilde {\mathcal B}_k=-f_k\in L^2([0, \pi])$ and the output operator $\mathcal C=\begin{bmatrix} \hat {\mathcal C} & 0\\ 
0& \hat {\mathcal C}
\end{bmatrix}$ with $\hat 
{\mathcal C}x=\frac{1}{2\epsilon}\int_{\frac{\pi}{2}-\epsilon}^{\frac{\pi}{2}
+\epsilon } x(\zeta) d\zeta$ $(x\in L^2([0, \pi])$ in system (\ref{nichtlinsde2})-(\ref{eq:output}) then we obtain the system in (\ref{inrobspeq})-(\ref{introbspoutnew}), which is therefore 
well-defined in the mild sense (\ref{mildsolalternative}).

\subsection{Numerical approximation}\label{subsecspeltralgal}

We study a spectral Galerkin scheme to approximate the mild 
solution of (\ref{firstordertransf}) with output  (\ref{firstordertransfout}), similar to the approach in \cite{galerkin,galerkinhaus,galerkinjentzen,redbensec} (mainly for 
SPDEs with Wiener noise). This approximation is based on a particular choice of 
an orthonormal basis $\{h_k\}_{k\in\mathbb N}$ of $H$, given by 
  \begin{align}\label{eigwavecae}
h_{2i-1}=\tilde\lambda_i^{-\frac{1}{2}} \begin{bmatrix}
   \tilde h_i \\
  0 
 \end{bmatrix}\quad\text{and}\quad h_{2i}=\begin{bmatrix}
  0 \\
  \tilde h_i 
 \end{bmatrix}\;\text{for}\; i\in\mathbb N,
 \end{align}
where $\{\tilde h_k\}_{k\in\mathbb N}$ and 
$\{\tilde \lambda_k\}_{k\in\mathbb N}$ are defined in (\ref{eigvaleq}), see \cite{redbensec}. In (\ref{inrobspeq})-(\ref{introbspoutnew}), we have $\tilde{\mathcal A}=-\frac{\partial^2}{\partial \zeta^2}$ on $[0, \pi]$. In this case $\tilde 
h_k=\sqrt{\frac{2}{\pi}} \sin(k\cdot)$ and $\tilde\lambda_k=k^2$ for $k\in\mathbb N$.

To approximate the $H$-valued process $\mathcal X$ in 
(\ref{firstordertransf}), we construct a sequence 
$\left(X_n\right)_{n\in\mathbb N}$ of finite dimensional adapted c\`adl\`ag 
processes with values in $H_n=\spaned\left\{h_1, \ldots, h_n\right\}$, 
defined by \begin{align}\label{approxsde}
d X_n(t)&= \mathcal{A}_n X_n(t)dt+\sum_{k=1}^m\mathcal B_{k,n} dM_k(t),\;\;\; X_n(0)=X_{0, 
n},\\ \nonumber
y_n(t)&=\mathcal{C}X_n(t) \;\;\;t\geq 0,
 \end{align}
where we set
\begin{itemize}
\item{$\mathcal A_n x=\sum_{k=1}^n \left\langle \mathcal A x, 
h_k\right\rangle_H h_k \in H_n$ for all $x\in D(\mathcal A)$,
}
\item{$\mathcal B_{k, n}=\sum_{k=1}^n \left\langle \mathcal B_k, 
h_k\right\rangle_H h_k \in H_n$ for $k\in \left\{1, \ldots, m\right\}$,
}
\item{$X_{0, n}=\sum_{k=1}^n \left\langle X_0, h_k\right\rangle_H h_k\in H_n$.
}
\end{itemize}
For the mild solution to (\ref{approxsde}), let $(S_n(t))_{t\geq 0}$ be a 
$C_0$-semigroup on $H_n$ given by
\begin{align*}
S_n(t) x=\sum_{k=1}^n \left\langle S(t) x, 
h_k\right\rangle_{H} h_k 
\end{align*}
for all $x\in H$. It is generated by $\mathcal A_n$ such that the mild solution 
of equation (\ref{approxsde}) is \begin{align*}
X_n(t)=S_n(t) X_{0, n}&+\int_0^t S_n(t-s) \mathcal B_{k, n} dM_k(s).
\end{align*}
Since $\mathcal A_n$ is bounded, the $C_0$-semigroup on 
$H_n$ is represented by $S_n(t)=\expn^{\mathcal A_n t}$, $t\geq 0$.
We formulate the main result of this section, which uses ideas from   
\cite{galerkin, galerkinhaus, galerkinjentzen, redbensec} and is proved in Appendix \ref{sec:app2}.
\begin{thm}\label{th:mildsol}
The mild solution $X_n$ of equation (\ref{approxsde}) approximates the 
mild solution $\mathcal X$ of equation (\ref{firstordertransf}), i.e. 
\begin{align*}
\mathbb E\left\|X_n(t)-\mathcal X(t)\right\|_{H}^2\rightarrow 0
\end{align*}
for $n\rightarrow \infty$ and $t\geq 0$. This implies the 
convergence of the corresponding outputs $\mathbb E\left\|y_n(t)-\mathcal 
Y(t)\right\|_{\mathbb{R}^p}^2\rightarrow 0$.
\end{thm}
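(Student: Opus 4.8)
The plan is to establish the mean-square convergence by comparing, term by term, the mild solution \gref{mildsolalternative} of \gref{firstordertransf} with the analogous representation $X_n(t)=S_n(t)X_{0,n}+\sum_{k=1}^m\int_0^t S_n(t-s)\mathcal B_{k,n}\,dM_k(s)$ of the mild solution of \gref{approxsde}. Let $\Pi_n$ denote the orthogonal projection of $H$ onto $H_n=\spaned\{h_1,\dots,h_n\}$; the construction in Section \ref{subsecspeltralgal} gives $\mathcal B_{k,n}=\Pi_n\mathcal B_k$, $X_{0,n}=\Pi_n X_0$ and $S_n(t)=\Pi_n S(t)$. I would first record three ingredients: (i) $\Pi_n x\to x$ in $H$ for every $x\in H$, with $\|\Pi_n\|_{L(H)}\le 1$; (ii) the exponential stability $\|S(t)\|_{L(H)}\le\expn^{-ct}$ from Lemma \ref{lemwaveconexsemigr}, which transfers to $\|S_n(t)\|_{L(H)}\le\expn^{-ct}$; and (iii) the Itô isometry for stochastic convolutions against the square integrable martingale components $M_k$ (see \cite{zabczyk}). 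Subtracting the two representations splits the error into a deterministic initial-condition term $I_n(t):=S_n(t)\Pi_n X_0-S(t)X_0$ and a sum of stochastic convolutions $II_{n,k}(t):=\int_0^t\big(S_n(t-s)\Pi_n\mathcal B_k-S(t-s)\mathcal B_k\big)\,dM_k(s)$; since the Itô integrals have mean zero, the cross term drops and $\mathbb E\|X_n(t)-\mathcal X(t)\|_H^2=\|I_n(t)\|_H^2+\mathbb E\|\sum_{k=1}^m II_{n,k}(t)\|_H^2$.

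For the deterministic part I would add and subtract $\Pi_n S(t)X_0$ to get $I_n(t)=\Pi_n S(t)(\Pi_n X_0-X_0)+(\Pi_n-\mathrm{Id})S(t)X_0$; the first summand is bounded by $\|\Pi_n X_0-X_0\|_H\to 0$ using (i) and the contraction property, and the second tends to $0$ because $\Pi_n\to\mathrm{Id}$ strongly and $S(t)X_0$ is a fixed element of $H$. For the stochastic part I would apply the Itô isometry to rewrite $\mathbb E\|\sum_k II_{n,k}(t)\|_H^2$ as $\sum_{k,l}(\mathcal Q_M)_{kl}\int_0^t\langle E_{n,k}(t-s),E_{n,l}(t-s)\rangle_H\,ds$ with $E_{n,k}(\tau):=S_n(\tau)\Pi_n\mathcal B_k-S(\tau)\mathcal B_k$, and then bound this by $\|\mathcal Q_M\|\sum_k\int_0^t\|E_{n,k}(t-s)\|_H^2\,ds$ using that $\mathcal Q_M$ is positive semidefinite. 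The same add-and-subtract trick shows $E_{n,k}(\tau)\to 0$ in $H$ for each fixed $\tau\ge 0$, while (i)--(ii) give the bound $\|E_{n,k}(\tau)\|_H\le 2\expn^{-c\tau}\|\mathcal B_k\|_H$; hence $s\mapsto\|E_{n,k}(t-s)\|_H^2$ is dominated on $[0,t]$ by the fixed integrable function $4\expn^{-2c(t-s)}\|\mathcal B_k\|_H^2$, and dominated convergence yields $\int_0^t\|E_{n,k}(t-s)\|_H^2\,ds\to 0$. Combining the two parts gives $\mathbb E\|X_n(t)-\mathcal X(t)\|_H^2\to 0$, and the convergence of the outputs then follows at once from $\mathbb E\|y_n(t)-\mathcal Y(t)\|_{\mathbb R^p}^2=\mathbb E\|\mathcal C(X_n(t)-\mathcal X(t))\|_{\mathbb R^p}^2\le\|\mathcal C\|_{L(H,\mathbb R^p)}^2\,\mathbb E\|X_n(t)-\mathcal X(t)\|_H^2$, since $\mathcal C\in L(H,\mathbb R^p)$.

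The main obstacle, and the only place where the specific wave-equation structure is used, is justifying the identity $S_n(t)=\Pi_n S(t)$: each two-dimensional block $\spaned\{h_{2i-1},h_{2i}\}$ is $\mathcal A$-invariant, so $H_n$ is invariant under $S(t)$ for even $n$, which makes $S_n(t)=\expn^{\mathcal A_n t}$ coincide with the restriction of $S(t)$ and inherit the decay rate of Lemma \ref{lemwaveconexsemigr}; one then passes to the limit through even $n$. After that, the only genuinely analytic step is the dominated-convergence argument for the stochastic convolution, which hinges on pairing the pointwise limit $E_{n,k}(\tau)\to 0$ with the $n$-uniform integrable majorant supplied by the exponential stability in Lemma \ref{lemwaveconexsemigr}; the remaining manipulations are routine.
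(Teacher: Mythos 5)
Your proof is correct and follows essentially the same route as the paper's: split the error into the initial-condition term and the stochastic convolution, treat the former by adding and subtracting $\Pi_n S(t)X_0$ together with the contraction property, and treat the latter by the It\^o isometry plus dominated convergence with the majorant supplied by Lemma \ref{lemwaveconexsemigr}. Your version is in fact slightly more careful in two spots the paper glosses over: you track the covariance matrix $\mathcal Q_M$ exactly rather than using the crude bound $\bigl\|\sum_k a_k\bigr\|^2\le m\sum_k\|a_k\|^2$ followed by a componentwise isometry, and you justify why $S_n(t)=\Pi_n S(t)$ is really the semigroup generated by $\mathcal A_n$ via the $\mathcal A$-invariance of the two-dimensional eigenblocks.
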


\noindent In the following, we make use of the property that the mild and the strong solution of (\ref{approxsde}) coincide, since we are in finite dimensions. 

We write the output $y_n$ of the Galerkin system as an expression 
depending on the Fourier coefficients of the Galerkin solution $X_n$. The 
coefficients of $y_n$ are 
\begin{align*}
 y_n^\ell(t)=\left\langle y_n(t), e_\ell\right\rangle_{\mathbb 
R^p}&=\left\langle \mathcal C X_n(t), e_\ell\right\rangle_{\mathbb 
R^p}=\sum_{k=1}^n \left\langle \mathcal C h_k, 
e_\ell\right\rangle_{\mathbb R^p} \left\langle 
X_n(t), h_k\right\rangle_{H}
\end{align*}
for $\ell=1, \ldots, p$, where $e_\ell$ is the $\ell$-th unit vector in 
$\mathbb R^p$. We set 
\begin{align*}
x(t)=\left(\left\langle X_n(t), h_1\right\rangle_{H}, \ldots, 
\left\langle X_n(t), h_n\right\rangle_{H}\right)^T\; \text{and}\; 
C=\left(\left\langle \mathcal C h_k, e_\ell\right\rangle_{\mathbb 
R^p}\right)_{{\ell=1, \ldots, p} \atop {k=1, \ldots, n}}
\end{align*}
and obtain $y_n(t)=C x(t)$.
The components $x^i(t):=\left\langle X_n(t), h_i\right\rangle_{H}$ of $x(t)$ 
satisfy 
\begin{align*}
d x^i(t)= \left\langle \mathcal{A}_n X_n(t), h_i\right\rangle_{H} dt+ \sum_{k=1}^m
\left\langle \mathcal B_{k,n}, h_i\right\rangle_{H} dM_k(t).
\end{align*}
Using the Fourier series representation of $X_n$, we obtain 
\begin{align*}
d x^i(t)&=\sum_{j=1}^n\left\langle \mathcal A_n h_j, h_i\right\rangle_{H} 
x^j(t)dt+ \sum_{k=1}^m \left\langle \mathcal B_{k,n}, 
h_i\right\rangle_{H}dM_{k}(t)\\ 
&=\sum_{j=1}^n\left\langle \mathcal A h_j, h_i\right\rangle_{H} 
x^j(t)dt+ \sum_{k=1}^m \left\langle \mathcal B_k, 
h_i\right\rangle_{H}dM_{k}(t).
\end{align*}
Hence, the vector of Fourier coefficients $x$ is given by 
\begin{align}\label{galerkwavenowspec}
d x(t)=A x(t)dt+\sum_{k=1}^m b_k dM_k(t),
\end{align}
where $A=\left(\left\langle \mathcal A h_j, 
h_i\right\rangle_{H}\right)_{i, j=1, \ldots, n}=\diag(E_1, \ldots, 
E_{\frac{n}{2}})$ with $E_\ell=\left(\begin{smallmatrix}0 
&\sqrt{\tilde\lambda_\ell}\\ 
-\sqrt{\tilde\lambda_\ell}&-\alpha \end{smallmatrix}\right)$ ($\ell=1, \ldots, 
\frac{n}{2}$), and $\tilde\lambda_\ell$ the eigenvalues of 
$\tilde{\mathcal A}$, and $b_k=\left(\left\langle \mathcal B_k , 
h_i\right\rangle_{H}\right)_{{i=1, \ldots, n}}$ for $k=1, \ldots , m$.
We will often make use of the compact form of the SDE in 
(\ref{galerkwavenowspec}) which is 
\begin{align}\label{galerkwavenowspeccomp}
d x(t)=A x(t)dt+B dM(t),
\end{align}
where $M=\left(M_1, \ldots, M_m\right)^T$ and $B=[b_1, \ldots, b_m]$. 

Applying the spectral Galerkin method to the system (\ref{inrobspeq})-(\ref{introbspoutnew}) the matrices of the semi-discretised system 
(\ref{galerkwavenowspec}) are given by $A=\diag \left(E_1,\ldots, 
E_{\frac{n}{2}}\right)$ with $E_\ell=\left( \begin{smallmatrix} 0 & \ell\\ 
-\ell &-\alpha \end{smallmatrix}\right)$, $b_i=\left(\left\langle \mathcal B_i, 
h_k\right\rangle_{H}\right)_{k=1, \ldots, n}$ with 
\[
\left\langle\mathcal B_i, h_{2 \ell-1}\right\rangle_{H}=0,\;\; \left\langle \mathcal B_i, h_{2 \ell}\right\rangle_{H}=\sqrt{\frac{2}{\pi}} \left\langle 
f_i, \sin(\ell\cdot) 
\right\rangle_{L^2{([0, \pi])}},\] 
and $C=\left[\mathcal 
C h_1, \ldots, \mathcal C h_n\right]$ with 
\begin{align*} 
\mathcal C 
h_{2\ell-1}&=\left(\begin{matrix}\frac{1}{\sqrt{2\pi}\ell^2\epsilon}\left[ 
\cos \left(\ell \left(\frac{\pi}{2} -\epsilon 
\right)\right)-\cos\left(\ell\left(\frac { \pi } { 
2 }+\epsilon \right)\right)\right] & 0 \end{matrix}\right)^T,\\
\mathcal C h_{2 \ell}&=\left(\begin{matrix} 0 & 
\frac{1}{\sqrt{2\pi}\ell\epsilon}\left[ \cos 
\left(\ell \left(\frac{\pi}{2} -\epsilon 
\right)\right)-\cos\left(\ell\left(\frac { \pi } { 
2 }+\epsilon \right)\right)\right]\end{matrix}\right)^T,
\end{align*}
where we assume $n$ to be even and  $\ell=1, \ldots, \frac{n}{2}$.
\begin{figure}[h!]
\centering
\includegraphics[width=0.8\textwidth, height = 5.9cm]{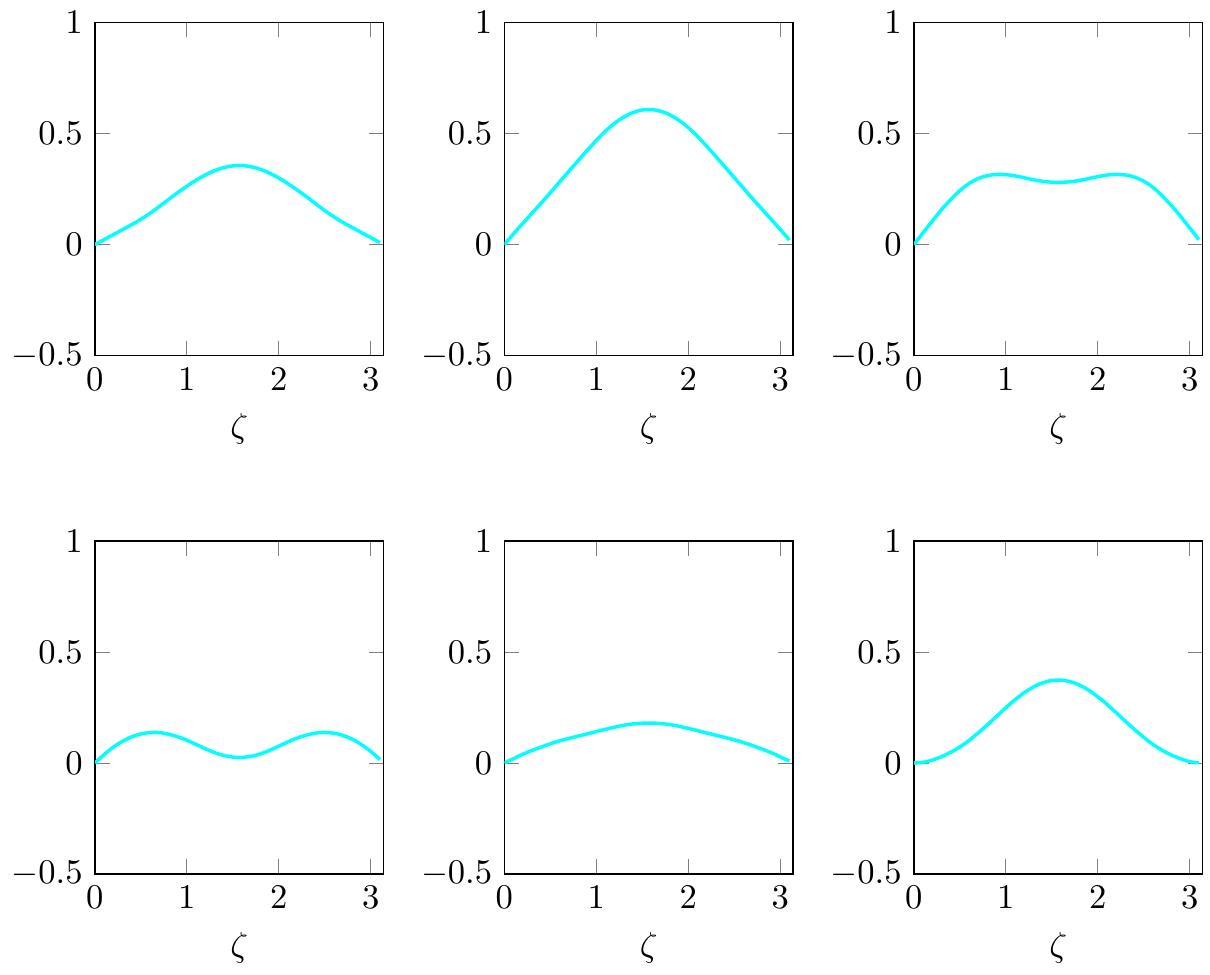}
\caption{Galerkin solution to the stochastic damped wave equation in (\ref{inrobspeq}).}
\label{fig:wavetime}
\end{figure}

In Figure \ref{fig:wavetime} we plot the numerical solution to the stochastic damped wave equation for $\zeta\in[0, \pi]$
and in the time interval $[0,\pi]$ where we set $\alpha = 2$, $n = 1000$ and $m=2$ (e.g. $2$ stochastic inputs). The weighting functions for the
two inputs are $f_1 = 2\exp(-(x-\pi/2)^2)$ and $f_2 = \sin(x)\exp(-(x-\pi/2)^2)$. The noise processes are $M_1(t)=\frac{W(t)}{\sqrt{2}}$ and $M_2(t)=\sum_{i=1}^{N(t)} K_i$ is a compound Poisson process,
where $(N(t))_{t\geq 0}$ is a Poisson process with parameter equal to $1$, $K_i\sim \mathcal{U}(-\sqrt{6}, \sqrt{6})$ are independent uniformly distributed jumps and $W$ is
a standard Wiener process. $M_1$ and $M_2$ are independent. The plot in Figure \ref{fig:wavetime} shows a particular realisation of the solution to (\ref{inrobspeq}) at $6$
specific times $t \in\left\{0.9, 1.2, 1.7, 2.1, 2.46,  2.93\right\}$.
\begin{figure}[h!]
\centering
\includegraphics[width=\textwidth, height = 4cm ]{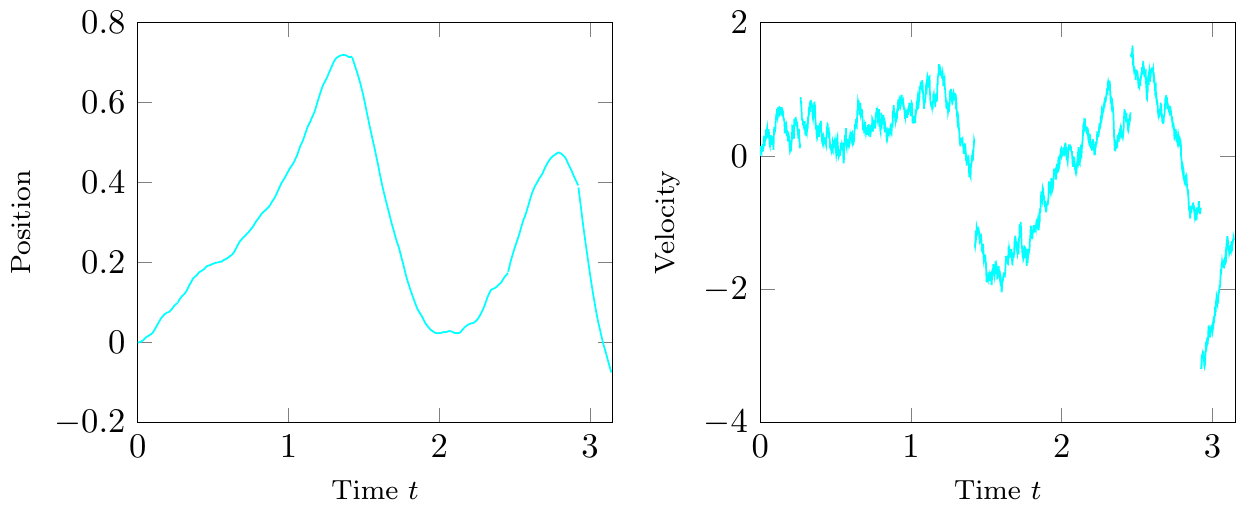}
\caption{Components of the output (\ref{introbspoutnew}) (position and velocity in the middle of the string) of stochastic damped wave equation in (\ref{inrobspeq}).}
\label{fig:position_velocity}
\end{figure}
We see that the string moves up and down as expected due to the nonzero (stochastic) input. We observe that the third snapshot is taken after a jump occured in stochastic process. The corresponding output, namely both the position and the velocity in the middle of the string, is shown in Figure \ref{fig:position_velocity}. In the plot for the velocity the noise generated by the L\'evy process can be seen. The trajectory of the velocity is impacted by L\'evy noise with jumps, where the velocity (e.g. the impact by wind) is randomly increased or reduced.
\begin{figure}[h!]
\centering
\includegraphics[scale=0.9]{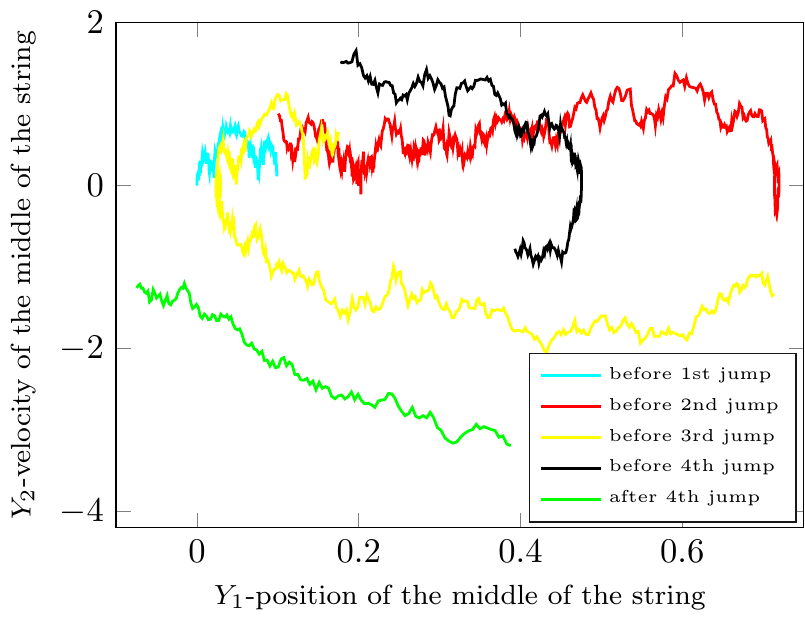}
\caption{Output of stochastic damped wave equation in (\ref{inrobspeq})-(\ref{introbspoutnew}) in the phase plane.}
\label{fig:paths}
\end{figure}
The trajectory for the position of the cable in Figure  \ref{fig:position_velocity} is smoother as it is the integral of the velocity. Finally, Figure \ref{fig:paths} shows the velocity versus the position of the string, for the same sample path, in a phase portrait. The four jumps are clearly visible. 

\section{Numerical examples for MOR}
\label{sec:numerics}

We consider the spectral Galerkin discretisation of the second order damped wave equation  which we discussed in detail in Section \ref{sec:wave}, and in
particular, the example in (\ref{inrobspeq})-(\ref{introbspoutnew}) with two stochastic inputs and two outputs, namely position and velocity of the 
middle of the string. We set $\alpha=2$ and choose the weighting functions $f_i$ and the noise processes $M_i$ ($i=1, 2$) as in 
Figure \ref{fig:wavetime}. We fix the state dimension to $n=1000$ and reduce the Galerkin solution by BT and SPA. For computing the trajectories of 
the SDE we use the Euler-Maruyama method (see, e.g. \cite{highamcompkloeden,highamcompeuler}).
\begin{figure}[h!]
\centering
\includegraphics[width=\textwidth]{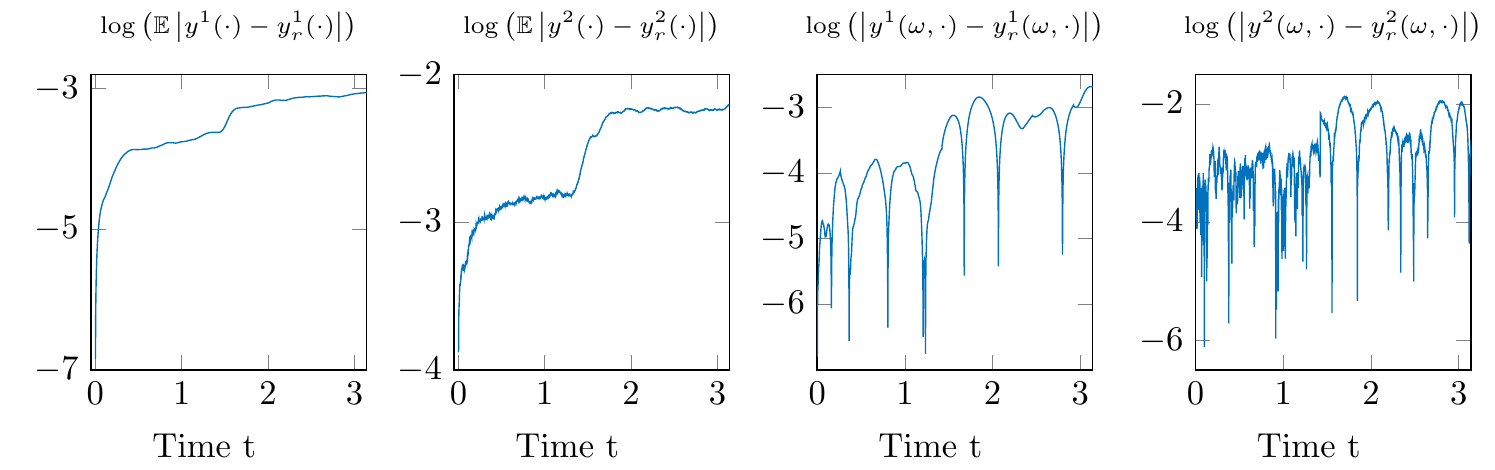}
\caption{Logarithmic errors of BT for position $y^1$ and velocity $y^2$ with $r=6$.}
\label{fig:BTr6}
\end{figure}
Figures \ref{fig:BTr6} and \ref{fig:BTr24} show the logarithmic errors for the position $y^1$ and the velocity $y^2$ of the middle of the string,
if MOR by BT is applied to the wave equation with stochastic inputs when reduced models of dimension $6$ and $24$, respectively, are computed. 
\begin{figure}[h!]
\centering
\includegraphics[width=\textwidth]{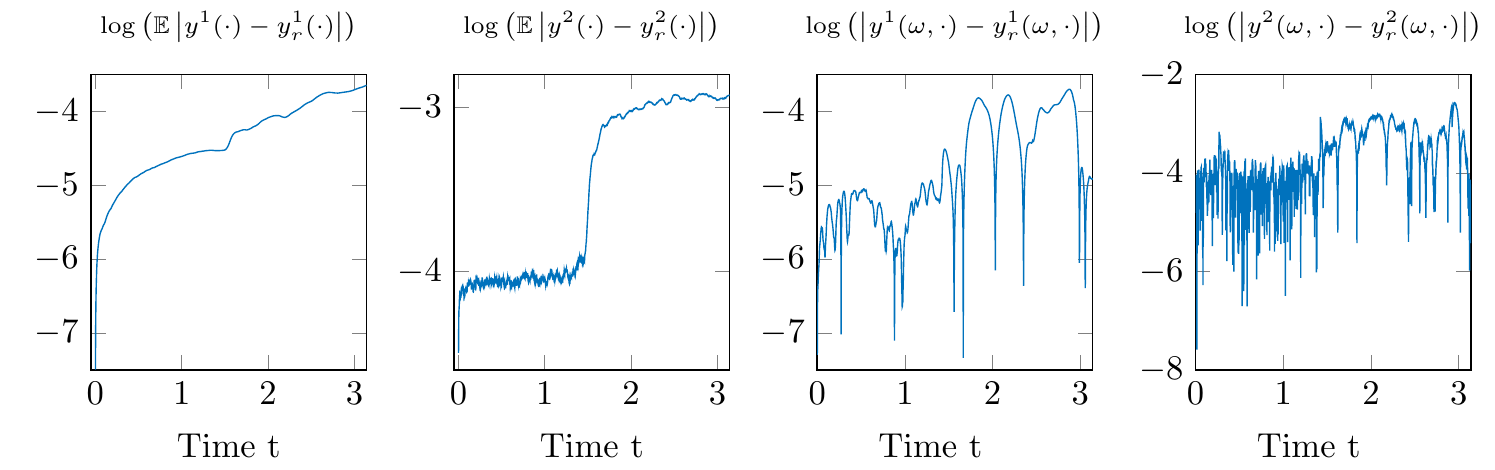}
\caption{Logarithmic errors of BT for position $y^1$ and velocity $y^2$ with $r=24$.}
\label{fig:BTr24}
\end{figure}
The first two plots in each of the figures show the logarithmic mean error for both the position and the velocity. One observation is that the 
position is generally more accurate than the velocity (about one order of magnitude here), since the trajectories are smoother. Moreover, comparing 
the expected values of the errors of the reduced model of dimension $r=6$ (first two plots in Figure \ref{fig:BTr6}) with the one of dimension $r=24$ 
(first two plots in Figure \ref{fig:BTr24}) it can be seen that the latter ones are more accurate (an improvement of about one order of magnitude) as 
one would expect. The last two plots in Figures \ref{fig:BTr6} and \ref{fig:BTr24} show the logarithmic errors for position and velocity for one 
particular trajectory, which is the same as the one for the sample we considered in Section \ref{sec:wave}.
\begin{figure}[h!]
\centering
\includegraphics[width=\textwidth]{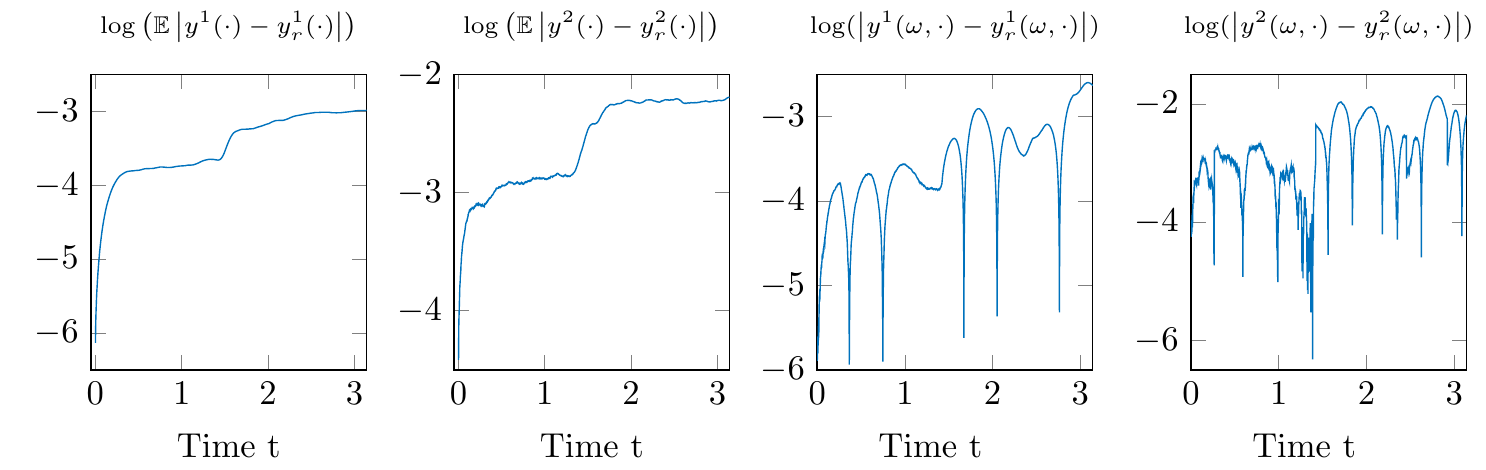}
\caption{Logarithmic errors of SPA for position $y^1$ and velocity $y^2$ with $r=6$.}
\label{fig:SPAr6}
\end{figure}
Figures \ref{fig:SPAr6} and \ref{fig:SPAr24} show the logarithmic errors for the position $y^1$ and the velocity $y^2$ of the middle of the string, 
if MOR by SPA is applied to the wave equation with stochastic inputs when reduced models of dimension $6$ and $24$, respectively, are computed. 
Again, the first two plots show the mean errors while the last two plots show the errors in particular trajectories.
\begin{figure}[h!]
\centering
\includegraphics[width=\textwidth]{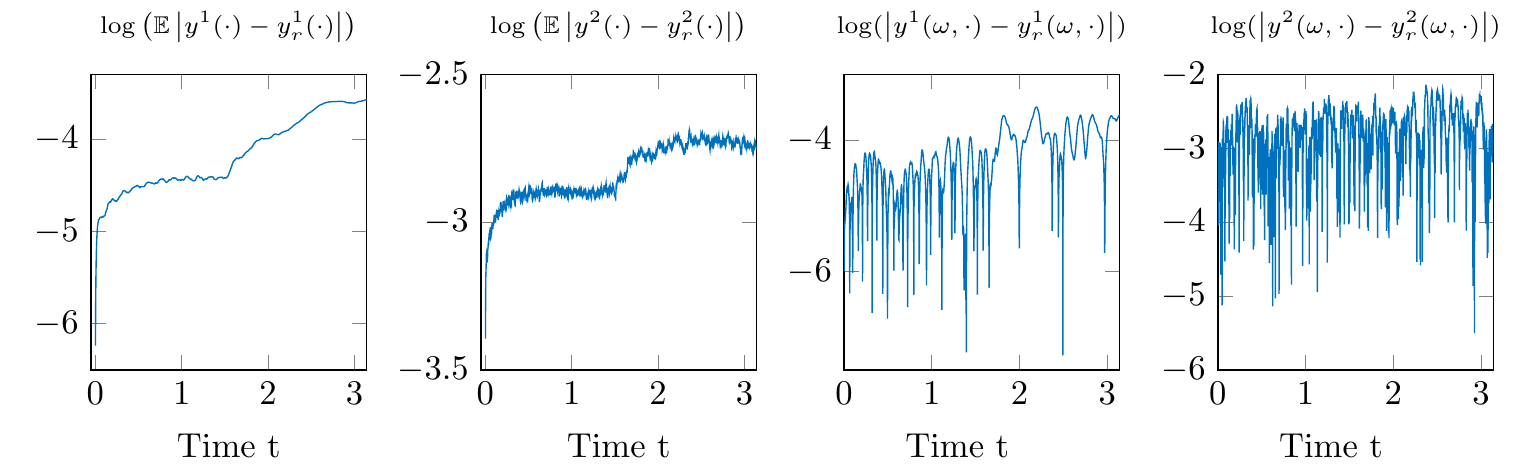}
\caption{Logarithmic errors of SPA for position $y^1$ and velocity $y^2$ with $r=24$.}
\label{fig:SPAr24}
\end{figure}
We observe that the error in the position is smaller than the error in the velocity, and, the error is smaller if a larger dimension of the reduced order model is used.

Finally, we compare the error bounds for BT (see Theorem \ref{bterrorbound}) and SPA (see Theorem \ref{spaerrorbound}) with the worst case mean 
errors, that is 
\begin{align*}  
\sup_{t\in[0, \pi]} \mathbb E\left\|y(t)-y_r(t)\right\|_{\mathbb{R}^p}
   \end{align*}
for both methods in Table \ref{tab:errorbd}, where $y=\left(\begin{matrix} y^1&y^ 2\end{matrix}\right)^T$ is the full output of the original model 
and $y_r$ the ROM output.
\begin{table}[h!]
\begin{center}
\begin{tabular}{|c|c|c|c|c|c|}
\hline
\text{Dim. ROM}&\text{Error BT }&\text{Error bound BT }&\text{Error SPA } 
&\text{Error bound SPA }
\\
\hline
\hline
   2 &  7.6387e-02 &  9.3245e-02 &  1.0852e-01 &  1.2293e-01\\
   4 &  8.5160e-03 &  1.2180e-02 &  8.6050e-03 &  1.2185e-02\\
   8 &  5.1560e-03 &  9.6638e-03 &  5.6720e-03 &  9.7072e-03\\
   16 &  1.8570e-03 &  6.6764e-03 &  2.4970e-03 &  6.7382e-03\\
   32 & 6.7050e-04  & 4.3849e-03  & 1.4410e-03 &  4.9106e-03\\
   64 &  9.9130e-05 &  2.3491e-03 &  3.1440e-04 &  2.6354e-03\\
\hline
\end{tabular}
\caption{Error and error bounds for both BT and SPA and several dimensions of the reduced order model (ROM).}
\label{tab:errorbd}
\end{center}
\end{table}
First, as expected both mean errors and error bounds are getting smaller the larger the size of the ROM. Moreover, both error bounds 
are rather tight and close to the actual error of the ROM, e.g. the bounds, which are worst case bounds also provide a good prediction of the true time domain error. We also note that BT performs better than SPA, both in actually 
computed mean  errors as well as in terms of the error bounds.  

\section{Conclusions}

We have presented theory for balancing related model order reduction (MOR) applied to linear stochastic differential equations (SDEs) with additive L\'evy noise. In particular we extended the concepts of reachability and observability to stochastic systems and formulated a new reachability Gramian. We then showed how balancing related MOR which is well known for deterministic systems can be extended to SDEs with additive L\'evy noise, e.g. leads to the solution of a Lyapunov equation (with a slightly different right hand side). We proved a general error bound for reduced (asymptotically stable) systems in this setting and then gave specific bounds for balanced truncation (BT) and singular perturbation approximation (SPA) which depended on the neglected (small) Hankel singular values of the linear system. We finally applied our theory to a second order damped wave equation, discretised using a spectral Galerkin method, and controlled by L\'evy noise. The numerical results showed that MOR can be applied successfully and that errors for both BT and SPA are small, and the error bounds tight.

\appendix

\section{Ito calculus}
Let all stochastic processes appearing in this section be defined on a filtered 
probability space $\left(\Omega, \mathcal F, (\mathcal F_t)_{t\geq 0}, 
\mathbb P\right)$\footnote{$(\mathcal F_t)_{t\geq 0}$ shall be right continuous 
and complete.}. We denote the set of all c\`adl\`ag square integrable $\mathbb 
R$-valued martingales with respect to $(\mathcal F_t)_{t\geq 0}$ by $\mathcal 
M^2(\mathbb R)$.\medskip

Let $Z_1, Z_2$ be scalar semimartingales. We set $\Delta 
Z_i(s):=Z_i(s)-Z_i(s-)$ with $Z_i(s-):=\lim_{t\uparrow s} 
Z_i(t)$ for $i=1, 2$. Then the Ito product formula 
\begin{align}\label{profriot}
Z_1(t) Z_2(t)=Z_1(0) Z_2(0)+\int_0^t Z_1(s-)dZ_2(s)+\int_0^t 
Z_2(s-)dZ_1(s)+[Z_1, Z_2]_t
\end{align}
for $t\geq 0$ holds, see \cite{semimartingalesmichel} or 
\cite{applebaumendlich} for the special case of L\'evy-type integrals.  
By \cite[Theorem 4.52]{limittheorems}, the compensator 
process $[Z_1, Z_2]$ is given by \begin{align}\label{decomqucov}
[Z_1, Z_2]_t=\left\langle M_1^c, M_2^c\right\rangle_t+\sum_{0\leq s\leq t} 
\Delta Z_1(s) \Delta Z_2(s)
\end{align}
for $t\geq 0$, where $M_1^c$, $M_2^c\in\mathcal M^2(\mathbb R)$ are the 
continuous martingale parts of $Z_1$ and $Z_2$ (cf. \cite[Theorem 4.18]{limittheorems}). The process $\left\langle M_1^c, M_2^c\right\rangle$ is a 
uniquely defined angle bracket process that ensures that $M_1^c 
M_2^c-\left\langle M_1^c, M_2^c\right\rangle$ is an $(\mathcal F_t)_{t\geq 0}$- 
martingale, see \cite[Proposition 17.2]{semimartingalesmichel}. As a simple 
consequence of (\ref{profriot}), we have:
\begin{kor}\label{iotprodformelmatpro}
Let $Y$ be an $\mathbb R^d$-valued and $Z$ be an $\mathbb R^n$-valued 
semimartingale, then we have\begin{align*}
Y(t) Z^T(t)=Y(0) Z^T(0)+\int_0^t dY(s) Z^T(s-) +\int_0^t Y(s-) 
dZ^T(s)+\left([Y_i,Z_j]_t\right)_{{i=1, \ldots, d}\atop {j=1, \ldots, n}}
\end{align*}
for all $t\geq 0$.
\begin{proof}
Considering the stochastic differential of the $ij$-th component of the 
matrix-valued process $Y(t) Z^T(t)$, $t\geq 0$, and using (\ref{profriot}) gives the result, see also \cite{redmannbenner}.
\end{proof}
\end{kor}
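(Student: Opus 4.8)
The plan is to reduce the matrix identity to $d\cdot n$ scalar identities, one per entry, and to invoke the scalar Ito product formula \gref{profriot} for each. First I would note that whenever $Y=(Y_1,\dots,Y_d)^T$ is an $\R^d$-valued semimartingale and $Z=(Z_1,\dots,Z_n)^T$ an $\R^n$-valued semimartingale, every component $Y_i$ and every component $Z_j$ is itself a scalar semimartingale, since a fixed linear functional of a semimartingale is again a semimartingale. Hence \gref{profriot} applies to each pair $(Y_i,Z_j)$ and yields, for all $i\in\{1,\dots,d\}$, $j\in\{1,\dots,n\}$ and $t\geq 0$,
\[
Y_i(t)Z_j(t)=Y_i(0)Z_j(0)+\int_0^t Y_i(s-)\,dZ_j(s)+\int_0^t Z_j(s-)\,dY_i(s)+[Y_i,Z_j]_t.
\]

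The second step is to match this entrywise with the asserted matrix expression. The $(i,j)$ entry of the outer product $Y(t)Z^T(t)$ is exactly $Y_i(t)Z_j(t)$, and similarly for $Y(0)Z^T(0)$. By the definition of the matrix-valued stochastic integrals, the $(i,j)$ entry of $\int_0^t dY(s)\,Z^T(s-)$ is $\int_0^t Z_j(s-)\,dY_i(s)$, the $(i,j)$ entry of $\int_0^t Y(s-)\,dZ^T(s)$ is $\int_0^t Y_i(s-)\,dZ_j(s)$, and the $(i,j)$ entry of $\left([Y_k,Z_\ell]_t\right)_{k,\ell}$ is $[Y_i,Z_j]_t$ by construction. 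Since both sides of the claimed identity then agree in every entry, they agree as matrices, which finishes the argument.

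This is essentially bookkeeping, so I do not expect a genuine obstacle; the only point requiring a little care is to pin down the index conventions for the outer products $dY\,Z^T$ and $Y\,dZ^T$ so that they really deliver $\int_0^t Z_j(s-)\,dY_i(s)$ and $\int_0^t Y_i(s-)\,dZ_j(s)$ in the $(i,j)$ slot, together with the (routine) remark that each scalar Ito integral against a semimartingale appearing here is well defined. A clean way to organise the computation is to write $Y(t)Z^T(t)=\sum_{i,j}Y_i(t)Z_j(t)\,e_ie_j^T$ with $e_i\in\R^d$, $e_j\in\R^n$ the canonical basis vectors and to differentiate each summand via \gref{profriot}; this makes the reduction to the scalar case transparent and is the route taken in \cite{redmannbenner}.
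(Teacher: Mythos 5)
Your proposal is correct and follows exactly the paper's own route: the paper likewise proves the corollary by applying the scalar Ito product formula \gref{profriot} to the $ij$-th component $Y_i(t)Z_j(t)$ of the matrix-valued process and assembling the entries. Your write-up merely spells out the entrywise bookkeeping that the paper leaves implicit.
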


\section{Proof of Theorem \ref{th:mildsol}}
\label{sec:app2}

Using $\left\|\sum_{k=1}^q a_k\right\|_{H}^2\leq q\sum_{k=1}^q \left\|
a_k\right\|_{H}^2$ for $a_k\in H$, we obtain
{\allowdisplaybreaks \begin{align*}
\mathbb E\left\|\mathcal X(t)-X_n(t)\right\|_{H}^2&\leq 2\mathbb E 
\left\|S(t)X_0-S_n(t)X_{0, n}\right\|_{H}^2\\ &\ \ \ +2m\sum_{k=1}^m\mathbb E 
\left\|\int_0^t (S(t-s) \mathcal B_k-S_n(t-s) \mathcal 
B_{k, n}) dM_k(s)\right\|_{H}^2.
\end{align*}}
Ito's isometry (see e.g. \cite{zabczyk}) yields that the right hand side can be bounded by $2\mathbb E 
\left\|S(t)X_0-S_n(t)X_{0, n}\right\|_{H}^2+2m\sum_{k=1}^m 
\int_0^t \left\|(S(t-s) \mathcal B_k-S_n(t-s) \mathcal 
B_{k, n})\right\|_{H}^2ds\; \mathbb E\left[M^2_k(1)\right]$. Since $(S(t))_{t\geq 0}$ is a contraction semigroup, we 
have\begin{align}\nonumber
\mathbb E \left\|S(t)X_0-S_n(t)X_{0, n}\right\|_{H}^2&\leq 2 
\mathbb E \left\|S(t)X_0-S_n(t)X_{0}\right\|_{H}^2+2 \mathbb E 
\left\|S_n(t)X_{0}-S_n(t)X_{0, n}\right\|_{H}^2\\ 
\label{inconcon2} &\leq 2 \mathbb E \left\|S(t)X_{0}-S_n(t)X_{0}\right\|_{H}^2+ 
2 \mathbb E \left\|X_0-X_{0, n}\right\|_{H}^2.
\end{align}
By the representation $S_n(t)x=\sum_{i=1}^n \left\langle S(t) x, 
h_i\right\rangle_{H} h_i$ ($x\in H$) and Lebesgue's 
theorem, the bound in (\ref{inconcon2}) tends to zero for $n\rightarrow \infty$. For $k\in\left\{1, \ldots, m\right\}$ we get
\begin{align*}
&\left\|S(t-s) \mathcal B_k-S_n(t-s) \mathcal B_{k, n}\right\|_{H}^2
\\&\leq 2\left\|S(t-s) \mathcal B_k-S_n(t-s) \mathcal B_k\right\|_{H}^2+2 
\left\|S_n(t-s) \mathcal B_k-S_n(t-s) \mathcal B_{k, n}\right\|_{H}^2
\\&\leq 2 
\left\|S(t-s) \mathcal B_k-S_n(t-s) \mathcal B_k\right\|_{H}^2+2\left\| 
\mathcal B_k -\mathcal B_{k, n}\right\|_{H}^2
\end{align*}
which tends to zero for $n\rightarrow \infty$ and hence 
\begin{align*}
\sum_{k=1}^m 
\int_0^t \left\|(S(t-s) \mathcal B_k-S_n(t-s) \mathcal 
B_{k, n})\right\|_{H}^2ds\; \mathbb E\left[M^2_k(1)\right]\rightarrow 0
\end{align*}
for $n\rightarrow \infty$ by Lebesgue's theorem. 

\bibliographystyle{abbrv}

\end{document}